\newtheorem{theorem}{Theorem}
\newtheorem{proposition}[theorem]{Proposition}
\newtheorem{lemma}[theorem]{Lemma}
\newtheorem{corollary}[theorem]{Corollary}
\theoremstyle{definition}
\newtheorem{remark}[theorem]{Remark}
\newtheorem{conjecture}[theorem]{Conjecture}
\newtheorem{example}[theorem]{Example}
\newcommand{\PP}{\mathbb{P}}
\newcommand{\RR}{\mathbb{R}}
\newcommand{\CC}{\mathbb{C}}
\newcommand{\ZZ}{\mathbb{Z}}
\newcommand{\FF}{\mathbb{F}}
\newcommand{\CI}{\!\perp\!\!\!\perp\!}
\newcommand{\ASigma}{\mathcal{A}_\Sigma}
\begin{document}

\fancyhead{}

\title{\bf Marginal Independence Models}

\author{Tobias Boege}
\email{tobias.boege@mis.mpg.de}
\affiliation{%
  \institution{Max-Planck Institute for Mathematics in the Sciences}
  \streetaddress{Inselstra{\ss}e 22}
  \city{Leipzig}
  \country{Germany}
  \postcode{04103}
}

\author{Sonja Petrovi{\'c}}
\email{sonja.petrovic@iit.edu}
\affiliation{%
  \institution{Illinois Institute of Technology}
  \streetaddress{10 West 35th Street}
  \city{Chicago}
  \country{USA}
}

\author{Bernd Sturmfels}
\email{bernd@mis.mpg.de}
\affiliation{%
  \institution{Max-Planck Institute for Mathematics in the Sciences}
  \streetaddress{Inselstra{\ss}e 22}
  \city{Leipzig}
  \country{Germany}
  \postcode{04103}
}

\begin{abstract}
We impose rank one constraints on 
marginalizations of a tensor, given by a simplicial complex.
Following work of Kirkup and Sullivant, 
such marginal independence models can be made toric by a linear change of coordinates.
We study their toric ideals, with emphasis on
random graph models and 
independent set polytopes of matroids.
We develop the numerical algebra of
parameter estimation, using both Euclidean distance and maximum likelihood,
and we present a comprehensive database of small models.
\end{abstract}

\begin{CCSXML}
<ccs2012>
<concept>
<concept_id>10002950.10003648.10003649</concept_id>
<concept_desc>Mathematics of computing~Probabilistic representations</concept_desc>
<concept_significance>500</concept_significance>
</concept>
<concept>
<concept_id>10002950.10003648.10003662.10003663</concept_id>
<concept_desc>Mathematics of computing~Maximum likelihood estimation</concept_desc>
<concept_significance>300</concept_significance>
</concept>
<concept>
<concept_id>10002950.10003705</concept_id>
<concept_desc>Mathematics of computing~Mathematical software</concept_desc>
<concept_significance>300</concept_significance>
</concept>
<concept>
<concept_id>10002950.10003714.10003715.10003720</concept_id>
<concept_desc>Mathematics of computing~Computations on polynomials</concept_desc>
<concept_significance>300</concept_significance>
</concept>
</ccs2012>
\end{CCSXML}

\ccsdesc[500]{Mathematics of computing~Probabilistic representations}
\ccsdesc[300]{Mathematics of computing~Maximum likelihood estimation}
\ccsdesc[300]{Mathematics of computing~Mathematical software}
\ccsdesc[300]{Mathematics of computing~Computations on polynomials}

\keywords{marginal independence, tensor rank, simplicial complex, toric variety,
matroid, independent set polytope, maximum likelihood degree, Euclidean distance
degree}

\maketitle

\section{Introduction}

This article concerns a class of statistical models
for discrete random variables that was introduced by Kirkup in \cite{Kir}
and studied further by Sullivant in \cite[Section~4.3.2]{Sul}.
Let $X_1,X_2,\ldots,X_n$ be random variables, where
$X_i$ has the finite state space $[d_i] = \{1,2,\ldots,d_i \}$.
A joint distribution for these random variables is a tensor
$P = (p_{i_1 i_2 \cdots i_n})$ of format
$d_1 \times d_2 \times \cdots  \times d_n$ whose
entries are nonnegative real numbers that sum to $1$.
These distributions are elements in the probability simplex $\Delta_{D-1}$, where 
$D = d_1 d_2 \cdots d_n$.
 As is customary in algebraic statistics \cite{AHT, BC, ST, Sul},
we identify this simplex with the projective space $ \PP^{D-1}$.
Furthermore, by a ``model'' we will mean either a subvariety of $\PP^{D-1}$ or its
intersection with $\Delta_{D-1}$, depending on
context. The latter is a semialgebraic set.

For any subset $\sigma$ of $[n] = \{1,\ldots,n\}$,
we write $P_\sigma$ for the corresponding {\em marginalization}.
Thus $P_\sigma$ is a tensor with $\prod_{i \in \sigma} d_i$ entries. These are obtained from
the entries $p_{i_1 i_2 \cdots i_n}$ of $P$ by
summing out the indices not in $\sigma$.
For instance, if $n=5$ and $\sigma = \{2,3\}$ then
$P_\sigma$ is the $d_2 \times d_3$ matrix whose entry in row $j$ and column $k$ equals
$\,p_{+ jk + + } = \sum_{i=1}^{d_1} \sum_{l = 1}^{d_4} \sum_{m=1}^{d_5} p_{ijklm}$.

Let $\Sigma$ be any simplicial complex with vertex set $[n] $. We assume
 $\{i\} \in \Sigma$ for all $i \in [n]$.
The {\em marginal independence model} $\mathcal{M}_\Sigma$ 
is the set of distributions $P \in \Delta_{D-1}$ such that $P_\sigma$ has rank $1$ for all $\sigma \in \Sigma$.
The random variables $\{X_i: i \in \sigma\}$ are completely independent for $\sigma \in \Sigma$.
Since the rank $1$ constraint is the vanishing of the $2 \times 2$ minors of all
flattenings of $P_\sigma$, we see that $\mathcal{M}_\Sigma$ is a 
variety defined by quadratic equations in the tensor space $\PP^{D-1}$.
The following model,
pairwise independence for three variables,
appears in \cite[Section~4.3.2]{Sul}.

\begin{example}[$3$-cycle] \label{ex:3cycle}
Let $n=3$ and
$\Sigma = \bigl\{ \{1,2\}, \allowbreak \{1,3\}, \allowbreak \{2,3\}, \allowbreak
\{1\}, \allowbreak \{2\}, \allowbreak \{3\}, \allowbreak \emptyset \bigr\}$.
The model $\mathcal{M}_\Sigma$ comprises  $d_1 \times d_2 \times d_3 $ tensors
whose three marginalizations are matrices of rank $1$.
This is an irreducible variety of dimension $\prod_{j=1}^3(d_j - 1) + \sum_{j=1}^3 (d_j - 1)$
 in the tensor space~$\PP^{D-1}$. 
 
 The natural constraints
 that describe this variety
 are the various $2 \times 2$ determinants
  \begin{equation}
 \label{eq:notenough}
 \begin{gathered}
 p_{ij+}\,p_{kl+} \,-\, p_{il+}\,p_{kj+}\, , \quad
 p_{i+j}\,p_{k+l} \,-\, p_{i+l}\,p_{k+j}\, , \\ {\rm and} \quad
p_{+ij}\,p_{+kl} \,-\, p_{+il}\,p_{+kj}. 
 \end{gathered}
 \end{equation}
 These do not generate the prime ideal of $\mathcal{M}_\Sigma$.
 See \cite[Section~6]{Kir} for extraneous components.

The binary model (all $d_i=2$), which serves as running example in \cite{Kir},
has dimension~$4$ and degree $5$ in $\PP^7$. Its ideal is generated by five quadrics and is Gorenstein.
The ternary model (all $d_i=3$) has dimension $14$ and degree $43$ in $\PP^{26}$. 
For this model, Kirkup concluded on \cite[page 453]{Kir} that  ``a free resolution cannot be computed''.
This is no longer true. Using the toric representation in Theorem \ref{thm:sethmain},
we easily find the
 {\tt Macaulay2} Betti diagram for $\mathcal{M}_\Sigma$:
\begin{small}
\begin{verbatim}
       0  1   2   3    4    5    6    7    8    9  10 11 12
total: 1 55 303 920 2309 4740 6700 6038 3348 1082 228 61  7
    0: 1  .   .   .    .    .    .    .    .    .   .  .  .
    1: . 55 303 711  759  285    .    .    .    .   .  .  .
    2: .  .   . 209 1550 4455 6699 6029 3276  887  30  .  .
    3: .  .   .   .    .    .    1    9   72  195 198 61  7
\end{verbatim}
\end{small}
Note that the ideal is perfect (Cohen-Macaulay), in accordance with \cite[Conjecture 28]{Kir}. \hfill
$ \triangle$
\end{example}

The toric representation is due to Sullivant \cite[Section~4.3.2]{Sul}, who
proves it for $n=3$ and states that it
``will also work for many other marginal independence models''. Our
 Sections~\ref{sec2} and \ref{sec3} develop this in detail.
We show that each model $\mathcal{M}_\Sigma$ is a toric variety
after a linear change of coordinates. This reaffirms  that
``the world is toric'' \cite[Section~8.3]{MS}. 
The new coordinates were called 
{\em M\"obius parameters} by Drton~and Richardson~\cite{DR}. These authors also treat
marginal independence but their set-up is based on bidirected graphs and it differs
from ours. 
Most of the models in \cite{DR} are not in the class we study here; see
Example \ref{ex:DRcomp}.

This paper is organized  as follows. In Section~\ref{sec2} we introduce
{\em M\"obius coordinates} $q_\bullet$, and we show that these have
   desirable properties.
For instance, for $2 \times 2 \times 2$ tensors, they~are
$
q = p_{+++}, \, \allowbreak
q_1 = p_{1++}, \, \allowbreak
q_2 = p_{+1+}, \, \allowbreak
q_3 = p_{++1}, \, \allowbreak
q_{12} = p_{11+}, \, \allowbreak
q_{13} = p_{1+1}, \, \allowbreak
q_{23} = p_{+11}, \, \allowbreak
q_{123} = p_{111}
$.
In these coordinates, the prime ideal for the binary 3-cycle in Example \ref{ex:3cycle} is the toric ideal
\begin{multline}
\label{eq:3cycle222}
\langle\,
q_1 \,q_2 -q \,q_{12},\,
q_1 \,q_3-q \,q_{13},\,
q_2\, q_3-q \,q_{23}, \\
q_2\, q_{13} - q_1 \,q_{23}\,,\,\,
q_3 \,q_{12} -  q_1 \,q_{23}\, 
\rangle  =  {} \\
\mbox{Pfaffians}_4 \begin{small}
\begin{pmatrix}
   0 & \!\!\! \! -q &  q_2 &    0 & q_3 \\
  q  &  0  &\!\!\! -q_2 & q_1 & 0 \\
 -q_2 &  q_2 & 0 & \!\!\!-q_{12} & \!\!\! -q_{23} \\
  0 & \!\!\!\! -q_1 & q_{12} & 0 & q_{13} \\
-q_3 & 0 & q_{23} &\!\!\! -q_{13} & 0 
\end{pmatrix} \! . \end{small}
\end{multline}

In Section~\ref{sec3} we show that every marginal independence model $\mathcal{M}_\Sigma$ is toric in the
M\"obius coordinates $q_\bullet$. In particular, we identify the associated 
polytope $\mathcal{P}_\Sigma$ and model matrix $\mathcal{A}_\Sigma$.
Section~\ref{sec4} is devoted to the minimal generators of the
toric ideals $I_{\mathcal{A}_\Sigma}$. We start from the matroid case,
where the  simplicial complex $\Sigma$ consists of all independent sets. 
We are especially interested in graphic matroids, where 
$\Sigma$ is the set of all subforests in a graph with $n$ edges.
 This leads to 
a new class of random graph models, featured in Example~\ref{ex:randomgraph}.

In Section~\ref{sec5} we turn to the statistical task of parameter estimation.
Given any empirical distribution $U $, we seek to compute a distribution
$\hat P$ in the model $\mathcal{M}_\Sigma$ that best explains the data $U$.
We study this from the perspectives of maximum likelihood (ML) 
and Euclidean distance (ED), with emphasis on exact solutions to 
the corresponding optimization problems.

In Section~\ref{sec6} we present a complete list of all small marginal independence models with binary states.
This includes all simplicial complexes for  $n \leq 6$ and all matroids for $n \leq 7$.

\section{M\"obius Coordinates and the Segre Variety}
\label{sec2}

Let $\RR^D$ be the space of
real tensors of format $d_1 \times \cdots \times d_n$
and $\PP^{D-1}$ the corresponding projective space.
Following \cite[Section~4.3.2]{Sul},
we define an $\RR$-linear map $\,\varphi: \RR^D \rightarrow \RR^D,
\, P \mapsto Q$.
The entries $q_{\ell_1 \ell_2 \cdots \ell_n}$ of $Q$ are called {\em M\"obius coordinates}.
Here, $\ell_j \in \{1,2,\ldots,d_{j}-1, + \}$.
The M\"obius coordinate
$q_{\ell_1 \ell_2 \cdots \ell_n}$ is simply the
linear  form  $p_{\ell_1 \ell_2 \cdots \ell_n}$ in the 
probability coordinates. That linear form
 is an entry of the marginal tensor $P_\sigma$
where $\sigma = \{j : \ell_j \not= + \}$.

\begin{example}[$n=3$, all $d_i = 3$]
The tensor $Q$ has entries indexed by $\{1,2,+\}^3$, for instance
\begin{align*}
q_{121} &= p_{121}, \\
q_{12+} &= p_{121} + p_{122} + p_{123}, \\
q_{+1+} &=
p_{111} + p_{112} + p_{113} + p_{211} + {} \\
&\hphantom{{}={}} p_{212} + p_{213} + p_{311} + p_{312} + p_{313}.
\end{align*}
The M\"obius coordinate $q_{+++}$ is the sum of all $27$ probability coordinates $p_{ijk}$.
The map $\varphi$
is represented  by an upper-triangular $27 \times 27$ matrix 
with entries in $\{0,1\}$. All $27$ diagonal entries are $1$. Precisely
 $98$ of the $351$ entries above the diagonal are $1$. The  ordering of the basis which gives the upper-triangular form can be found under the ``ternary 3-cycle" example on this project's MathRepo page referred to below. 
\hfill $ \triangle$
\end{example}

The map $\varphi$ is invertible because it is represented by an upper triangular matrix with all diagonal entries equal to one.
Moreover, all entries in the inverse matrix are $+1$, $-1$ or $0$. 
 This is precisely the process of {\em M\"obius inversion} for a poset on $D$ elements, which is the direct product 
 of the posets $\{1,2,\ldots,d_j-1,+\}$, where $+$ is below the other $d_j-1$ elements, which are incomparable.

A nonzero tensor $P$  in $\RR^D$ is said to have {\em rank one} if its entries factorize as follows:
\begin{multline} 
\label{eq:tensorfac}
\quad p_{i_1 i_2 \cdots i_n} \,\, \, = \,\,\,
\lambda \theta^{(1)}_{i_1} \theta^{(2)}_{i_2} \cdots \,\theta^{(n)}_{i_n} \\
\hbox{for} \,\, i_1 \in [d_1], \, i_2 \in [d_2], \ldots, i_n \in [d_n]. 
\end{multline}
Here $\lambda$ is an additional homogenization parameter, and the
other parameters satisfy
\begin{equation}
\label{eq:thetarelation}
\theta^{(j)}_{d_j} \,\, = \,\, 1 - \sum_{k=1}^{d_j-1} \theta^{(j)}_k
\qquad \hbox{for} \,\, j \in [n].
\end{equation}
Let $\mathcal{S}$ denote the set of rank one tensors in $\RR^D$.
The~image of $\mathcal{S}$ in $\PP^{D-1}$ is isomorphic to
$\PP^{d_1-1} {\times}\, \PP^{d_2-1} \times \cdots \times \PP^{d_n-1}$.
The~total number of free parameters in the
parametrization (\ref{eq:tensorfac})~is 
\begin{equation}
\label{eq:paracount} {\rm dim}(\mathcal{S}) \,\, = \,\, d_1 + d_2 + \cdots + d_n - n + 1 . 
\end{equation}
In what follows, we refer
to the affine cone $\mathcal{S}$ in the tensor space $\RR^D$ as the {\em Segre variety}.

The homogeneous prime ideal  $I_\mathcal{S}$ of the Segre variety is a toric ideal. It is known to be
generated by quadratic binomials. Namely, these ideal generators are
the $2 \times 2$ minors of all the flattenings of the tensor $P$.
For more information see  \cite[Sections~8.2 and 9.2]{MS}.
The parametrization $\varphi$ corresponds dually to the map of
polynomial rings $\varphi^*: \RR[Q] \rightarrow \RR[P]$, where
each unknown $q_{\ell_1 \ell_2 \cdots \ell_n}$ is mapped to the
corresponding linear form  $p_{\ell_1 \ell_2 \cdots \ell_n}$.
We further identify the two index sets $\bigtimes_{j=1}^n [d_j]$
and $\bigtimes_{j=1}^n ([d_j-1] \cup \{+\}) $ by simply sending $d_j$ to $+$.
We write $J_\mathcal{S}$ for the toric ideal in $\RR[Q]$ that is obtained
from the toric ideal $I_\mathcal{S}$ in $\RR[P]$ from the resulting identification of labels.
With this notation, we have the following key lemma.

\begin{lemma} \label{lem:key}
The inverse image of the toric ideal $I_\mathcal{S}$ in $\RR[P]$ under the ring map $\varphi^*$ is
the toric ideal $J_\mathcal{S}$ in $\RR[Q]$. In other words, the
linear change of coordinates $\varphi$ preserves the Segre variety $\mathcal{S}$.
In particular, $\mathcal{S}$ is still a toric variety, even when  written in M\"obius coordinates.
\end{lemma}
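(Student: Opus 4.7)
The plan is to verify the lemma by chasing parametrizations. The ideal $I_\mathcal{S}$ is the kernel of the rank-one parametrization $\psi_P^*:\RR[P]\to\RR[\tau]$, $p_{i_1\cdots i_n}\mapsto\tau^{(1)}_{i_1}\cdots\tau^{(n)}_{i_n}$ with the $\tau^{(j)}_k$ free (the homogenizer $\lambda$ of (\ref{eq:tensorfac}) can be absorbed here by dropping the simplex constraint (\ref{eq:thetarelation})). By the definition of $J_\mathcal{S}$ as the relabeling of $I_\mathcal{S}$ under $d_j\leftrightarrow+$, it is the kernel of the analogous map $\psi_Q^*:\RR[Q]\to\RR[\mu]$ sending $q_\ell\mapsto\mu^{(1)}_{\ell_1}\cdots\mu^{(n)}_{\ell_n}$. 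Since $\varphi^*(f)\in I_\mathcal{S}$ is equivalent to $\psi_P^*(\varphi^*(f))=0$, the lemma reduces to showing that the composition $\psi_P^*\circ\varphi^*:\RR[Q]\to\RR[\tau]$ has kernel exactly $J_\mathcal{S}$.

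To compute this composition, I would unfold $\varphi^*(q_\ell)=\sum_{i:\,i_j=\ell_j\text{ for }\ell_j\ne+}p_{i_1\cdots i_n}$ and apply $\psi_P^*$ term by term. Since the rank-one expression is multiplicative, the sums over the ``marginalized-out'' indices factor, giving
\[
\psi_P^*(\varphi^*(q_\ell))\;=\;\prod_{j:\ell_j\ne+}\tau^{(j)}_{\ell_j}\,\prod_{j:\ell_j=+}\Bigl(\sum_{k=1}^{d_j}\tau^{(j)}_k\Bigr).
\]
I would then introduce the block substitution $\mu^{(j)}_k:=\tau^{(j)}_k$ for $k<d_j$ and $\mu^{(j)}_+:=\sum_{k=1}^{d_j}\tau^{(j)}_k$. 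On each $d_j$-block this is a triangular $\RR$-linear change of variables (with inverse $\tau^{(j)}_{d_j}=\mu^{(j)}_+-\sum_{k<d_j}\mu^{(j)}_k$), so it is an automorphism of the parameter ring $\RR[\tau]=\RR[\mu]$. In these new variables, the displayed right-hand side reads exactly $\prod_j\mu^{(j)}_{\ell_j}=\psi_Q^*(q_\ell)$. Hence $\psi_P^*\circ\varphi^*$ coincides with $\psi_Q^*$ after this inner automorphism, so the two maps share the kernel $J_\mathcal{S}$, as needed.

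The geometric reformulations in the statement are then automatic: since $\varphi^*$ is a bijection of coordinate rings, $(\varphi^*)^{-1}(I_\mathcal{S})=J_\mathcal{S}$ translates to $\varphi(V(I_\mathcal{S}))=V(J_\mathcal{S})$, so $\mathcal{S}$ remains a (toric) Segre variety in M\"obius coordinates. The only step that is not formal bookkeeping is the invertibility of the block substitution $\tau^{(j)}\leftrightarrow\mu^{(j)}$, and this is where I expect the main subtlety to lie: the asymmetric definition of $\mu^{(j)}_+$ as the total sum can superficially suggest that the image shrinks, but the triangular structure of the substitution — essentially the same M\"obius inversion that makes $\varphi$ itself invertible in the preceding paragraph — shows that nothing is lost.
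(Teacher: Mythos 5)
Your proof is correct and takes essentially the same approach as the paper's: both compute the composition of $\varphi^*$ with the rank-one parametrization of $\mathcal{S}$ and identify its kernel with $J_\mathcal{S}$. The only difference is bookkeeping — you work with free parameters $\tau^{(j)}_k$ and absorb the sums $\sum_{k}\tau^{(j)}_k$ by an explicit triangular change of parameters, whereas the paper retains $\lambda$ and the normalization (\ref{eq:thetarelation}), under which $\theta^{(j)}_{+}=1$ and the $+$ indices drop out directly.
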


\begin{example}[$n=2,d_1=d_2=2$]
Consider the most simple case, namely $2 \times 2$ matrices.
The objects of interest are the principal ideals $I_\mathcal{S} = \langle p_{11} p_{22} - p_{12} p_{21} \rangle $ and
$J_\mathcal{S} = \langle q_{11} q_{++} - q_{1+} q_{+1} \rangle$.
The image of $J_\mathcal{S}$ under the ring map $\varphi^*: \RR[Q] \rightarrow \RR[P]$ is generated by the quadratic form
\begin{multline*}
   p_{11} p_{++} - p_{1+} p_{+1} \, = {} \\
   p_{11} (p_{11} + p_{12} + p_{21}+ p_{22} ) - (p_{11} + p_{12})(p_{11} + p_{21}) \, = {} \\
   p_{11} p_{22} - p_{12} p_{21}.
\end{multline*}
This identity shows that $\varphi^*(J_\mathcal{S}) = I_\mathcal{S}$, so 
the first assertion  in  Lemma \ref{lem:key} holds.
\hfill$ \triangle$
\end{example}

\begin{proof}[Proof of Lemma \ref{lem:key}]
We consider the one-to-one para\-metri\-zation of $\mathcal{S}$
given in (\ref{eq:tensorfac}) and~(\ref{eq:thetarelation}).  The image of 
$\varphi^*(q_{i_1 i_2 \cdots i_n}) = p_{i_1 i_2 \cdots i_n}$ under the
associated ring map equals $ \lambda \prod_{j=1}^n \theta^{(j)}_{i_j} $.
Here some of the indices $i_j$ take the value ${+}$. This means that we are
summing over all states in $[d_j]$. The hypothesis (\ref{eq:thetarelation}) ensures that
   $\theta^{(j)}_{i_j} = 1$ whenever $i_j = {+}$. Hence, we conclude~that
\begin{equation}
\label{eq:parafac2}
 q_{i_1 i_2 \cdots i_n} \,\,\mapsto \,\,\,\, \lambda  \! \prod_{j: i_j \in [d_j-1]} \!\!\! \theta^{(j)}_{i_j} 
 \end{equation}
under the composition of $\varphi$ with the parametrization (\ref{eq:tensorfac}) of the Segre variety $\mathcal{S}$.
By definition, the kernel of the ring map (\ref{eq:parafac2}), which is an ideal in $\RR[Q]$, is
the inverse image of $I_\mathcal{S}$ under~$\varphi^*$.

The products on the right of (\ref{eq:parafac2}) are monomials in the free parameters, which 
are counted in (\ref{eq:paracount}). The exponent vectors of these monomials
are the vertices of a polytope affinely isomorphic to
$ \Delta_{d_1-1} \times \Delta_{d_2-1} \times \cdots \times \Delta_{d_n-1}$.
Here we are using the identification of labels that was described  prior to Lemma \ref{lem:key}.
Hence the kernel of (\ref{eq:parafac2}) equals the toric ideal $J_\mathcal{S}$.
\end{proof}

Of special interest in statistics is the case of binary random variables,
where $d_1 = \cdots = d_n = 2$. Here each M\"obius coordinate
is indexed by a string $(i_1 , i_2, \ldots , i_n)$ in $\{1,+\}^n$.  Following \cite[Section~3]{DR}, 
we represent this string by a subset of~$[n]$, namely the set of indices
$j$ where $i_j = 1$. Thus, in the binary case, M\"obius coordinates are indexed by subsets of
$[n]$.

\begin{example}[Binary $3$-cycle]\label{ex:3cycleb}
Let $n=3$. 
The eight M\"obius coordinates $q_A$ are indexed by subsets $A \subseteq [3]$,
with the extra convention $q = q_\emptyset$.
The toric ideal $J_\mathcal{S}$ of the Segre variety $\mathcal{S}$ is generated by
nine quadratic binomials $p_\sigma p_\tau - p_{\sigma \cap \tau} p_{\sigma \cup \tau}$ in these unknowns.
  Now consider the  $3$-cycle model $\mathcal{M}_\Sigma$
  in Example \ref{ex:3cycle}. We write the prime ideal of this model in
   M\"obius coordinates, and we find that it is
generated  by the $4 \times 4$ subpfaffians of the skew-symmetric $5 \times 5$ matrix
shown in \eqref{eq:3cycle222}. This is the toric ideal obtained from $J_\mathcal{S}$ by
eliminating  $q_{123}$.
\hfill $\triangle$
\end{example}

\begin{example}[Binary bowtie] \label{ex:bowtie}
Let $n=5$. The $32$ M\"obius coordinates $q_A$ are indexed by subsets $A \subseteq [5]$.
Here $J_\mathcal{S}$ is generated by $285$ quadratic binomials.
Let $\Sigma$ be the graph with edges $12,13,23,14,15,45$.
As a simplicial complex, $\Sigma$ has $12$ faces, namely six edges, five vertices, and the empty set.
We now eliminate all $20$ unknowns $q_B$ with $B \not\in \Sigma$ from the Segre ideal $J_\mathcal{S}$.
The resulting toric ideal  in  $12$ M\"obius coordinates $q_C$ with $ C \in \Sigma$
defines~$\mathcal{M}_\Sigma$.
This ideal has codimension 
$6$
and degree $15$. 
It is minimally generated by $14$ quadrics and~one cubic, namely 
$q_{45} q_{12} q_{13} - q_{14} q_{15} q_{23}$.
This is the smallest marginal independence model whose ideal requires a cubic generator.
In Section~\ref{sec4} we shall see binomials of higher degree.
\hfill $\triangle$
\end{example}

\section{Toric Representation}	
\label{sec3}

Fix positive integers $d_1,d_2,\ldots,d_n$,
the Segre variety $\mathcal{S}$, and a simplicial complex $\Sigma$ on $[n]$.
The model $\mathcal{M}_\Sigma$ consists of all probability tensors $P$ with marginals
 $P_\sigma$ of rank one for all $\sigma \in \Sigma$.
Let $\mathcal{L}_\Sigma$ denote the linear subspace of all tensors $P$
in $\RR^D$ whose marginals $P_\sigma$ are zero for all $\sigma \in \Sigma$.
This subspace and our marginal independence model are related as follows:

\begin{lemma} \label{lem:Mdeco}
A tensor lies in the model $\mathcal{M}_\Sigma$ if and only if it is a
sum of a rank one tensor and a tensor in $\mathcal{L}_\Sigma$. In symbols,
we have the following rational parametrization of our variety:
\begin{equation} \label{eq:Mdeco}
\mathcal{M}_\Sigma \,\, = \,\, \mathcal{S} \,+ \, \mathcal{L}_\Sigma.
\end{equation}
\end{lemma}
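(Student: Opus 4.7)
The plan is to establish the set equality (\ref{eq:Mdeco}) by proving both inclusions, with the reverse inclusion carrying all of the content via an explicit reconstruction of the rank-one summand from the singleton marginals of $P$. The easy direction, $\mathcal{S} + \mathcal{L}_\Sigma \subseteq \mathcal{M}_\Sigma$, follows from linearity of marginalization: if $P = S + L$ with $S \in \mathcal{S}$ and $L \in \mathcal{L}_\Sigma$, then $P_\sigma = S_\sigma + L_\sigma = S_\sigma$ for every $\sigma \in \Sigma$, and writing $S = \lambda \bigotimes_{j \in [n]} \theta^{(j)}$ one computes $S_\sigma = \lambda \bigl(\prod_{j \notin \sigma} \theta^{(j)}_+\bigr) \bigotimes_{j \in \sigma} \theta^{(j)}$, a tensor of rank at most one. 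Hence $P \in \mathcal{M}_\Sigma$.

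For the reverse inclusion, given $P \in \mathcal{M}_\Sigma$ I would build $S \in \mathcal{S}$ satisfying $S_\sigma = P_\sigma$ for every $\sigma \in \Sigma$; then $L := P - S$ lies in $\mathcal{L}_\Sigma$ automatically. Concretely, set $\lambda := P_\emptyset$, $\theta^{(j)}_k := P_{\{j\},k}/\lambda$ for all $j \in [n]$ and $k \in [d_j]$, and $S := \lambda\, \theta^{(1)} \otimes \cdots \otimes \theta^{(n)}$. Since $\theta^{(j)}_+ = P_{\{j\},+}/\lambda = 1$, the formula above yields $S_\sigma = \lambda \bigotimes_{j \in \sigma} \theta^{(j)}$. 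The heart of the proof is the equality $P_\sigma = S_\sigma$ for $\sigma \in \Sigma$: because $P_\sigma$ has rank one by hypothesis, any factorization $P_\sigma = \alpha \bigotimes_{j \in \sigma} v^{(j)}$ with each $v^{(j)}$ normalized to sum to $1$ is uniquely determined by its total sum $\alpha = \lambda$ and its singleton marginals $v^{(j)} = P_{\{j\}}/\lambda = \theta^{(j)}$. The key point, and in my view the only conceptually subtle one, is that these $\theta^{(j)}$'s do not depend on $\sigma$: associativity of marginalization gives $(P_\sigma)_{\{j\}} = P_{\{j\}}$ for every $\sigma \ni j$, so a single rank-one tensor $S$ matches $P_\sigma$ simultaneously for all faces $\sigma \in \Sigma$.

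The one technical wrinkle is the degenerate locus $\{\lambda = 0\}$ in $\RR^D$ where the $\theta^{(j)}$'s are undefined; this is a hyperplane. Since both sides of (\ref{eq:Mdeco}) are algebraic cones and the argument above establishes the equality on the dense open complement $\{\lambda \neq 0\}$, the full statement follows by Zariski closure. This is also precisely why the lemma phrases (\ref{eq:Mdeco}) as a \emph{rational} parametrization: the regular map $(S, L) \mapsto S + L$ on $\mathcal{S} \times \mathcal{L}_\Sigma$ has image dense in $\mathcal{M}_\Sigma$.
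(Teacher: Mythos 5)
Your proof takes essentially the same approach as the paper's (which in turn credits Kirkup's Theorem~1): reconstruct the rank-one summand from the singleton marginals of $P$ and observe that, for $\sigma \in \Sigma$, this one tensor simultaneously reproduces every face-marginal $P_\sigma$, so that the difference lands in $\mathcal{L}_\Sigma$. You spell out what the paper compresses into the phrase ``by independence'' --- namely the uniqueness of the normalized rank-one factorization of $P_\sigma$ and the compatibility $(P_\sigma)_{\{j\}} = P_{\{j\}}$ --- and you flag the degenerate locus $\lambda = 0$ explicitly, which the paper handles implicitly by its WLOG normalization to $\Delta_{D-1}$.
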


\begin{proof}
This is \cite[Theorem 1]{Kir}, and the argument is as follows.
Consider the $1$-marginals  $P_{\{1\}} \in \RR^{d_1}, \ldots, P_{\{n\}} \in \RR^{d_n}$.
Their tensor product  $P' = P_{\{1\}} \otimes \cdots \otimes  P_{\{n\}}$
is  a rank one tensor  in $\RR^D$. Assuming without loss of generality that
the entries of $P$ sum to $1$, this construction yields also
$P'_{\{i\}} = P_{\{i\}}$ for all $i \in [n]$.
Assume now $P \in \mathcal{M}_\Sigma $, so $P_\sigma$ has rank one for 
$\sigma \in \Sigma$. By independence this implies $P'_\sigma = P_\sigma$ for
all $\sigma \in \Sigma$.  Therefore, the difference $P-P'$ lies in the linear
space $\mathcal{L}_\Sigma$.
\end{proof}

We now connect the decomposition (\ref{eq:Mdeco}) to the M\"obius coordinates
from Section~\ref{sec2}.
The M\"obius coordinate $q_{i_1 i_2 \cdots i_n}$ is said to be  {\em relevant} if the set
$\,\{ \, j\, :\, i_j \in [d_j-1] \, \}\,$ is a face of~$\Sigma$.
Otherwise, $q_{i_1 i_2 \cdots i_n}$  is called {\em irrelevant}.
In the binary case, the relevant M\"obius coordinates are those indexed by the faces
of $\Sigma$, and the irrelevant ones are indexed by the nonfaces.

\begin{lemma} \label{lem:relevant}
The linear space $\mathcal{L}_\Sigma$ is the common zero set of
all relevant M\"obius coordinates  $q_{i_1 i_2 \cdots i_n}$. Hence its dimension equals
the number of irrelevant M\"obius coordinates, which~is
\begin{equation}
\label{eq:serkan} 
{\rm dim}(\mathcal{L}_\Sigma) \,\, = \,\, \sum_{\tau \not\in \Sigma} \prod_{j \in \tau} (d_j-1) . 
\end{equation}
\end{lemma}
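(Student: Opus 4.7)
The plan is to show the set-theoretic equality first, and then extract the dimension from the fact that, once read in M\"obius coordinates, $\mathcal{L}_\Sigma$ becomes a coordinate subspace. Recall from Section~\ref{sec2} that the M\"obius coordinate $q_{i_1 i_2 \cdots i_n}$ is literally the linear form $p_{i_1 i_2 \cdots i_n}$; if its associated set is $\tau = \{j : i_j \neq +\}$, then this is exactly the entry of the marginal tensor $P_\tau$ indexed by $(i_j)_{j\in \tau} \in \prod_{j\in\tau}[d_j-1]$. Thus the ``relevant'' M\"obius coordinates are precisely those $q_{i_1 \cdots i_n}$ that appear as a distinguished subset of entries of some $P_\sigma$ with $\sigma \in \Sigma$.

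One inclusion is immediate: if $P \in \mathcal{L}_\Sigma$, then $P_\sigma = 0$ for every $\sigma \in \Sigma$, hence every entry of $P_\sigma$ vanishes, and in particular every relevant M\"obius coordinate vanishes. For the converse, I would show that every entry of every $P_\sigma$ with $\sigma \in \Sigma$ can be written as an integer linear combination of relevant M\"obius coordinates. A generic entry of $P_\sigma$ is $p_{\ell_1 \cdots \ell_n}$ with $\ell_j = +$ for $j \notin \sigma$ and $\ell_j \in [d_j]$ for $j \in \sigma$. Entries in which no $\ell_j$ equals $d_j$ are already relevant M\"obius coordinates with associated set contained in $\sigma$, hence in $\Sigma$. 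For each index $j \in \sigma$ with $\ell_j = d_j$, apply the identity
\[
p_{\ldots,d_j,\ldots} \,=\, p_{\ldots,+,\ldots} \,-\, \sum_{k=1}^{d_j-1} p_{\ldots,k,\ldots},
\]
and iterate over all such positions. The result expresses $p_{\ell_1 \cdots \ell_n}$ as a signed sum of M\"obius coordinates whose associated sets are subsets of $\sigma$. Because $\Sigma$ is a simplicial complex and $\sigma \in \Sigma$, every one of those subsets also lies in $\Sigma$, so each such coordinate is relevant. Consequently, vanishing of all relevant M\"obius coordinates forces $P_\sigma = 0$ for all $\sigma \in \Sigma$, i.e., $P \in \mathcal{L}_\Sigma$.

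Since $\varphi$ is a linear isomorphism of $\RR^D$, the set $\mathcal{L}_\Sigma$ is then a coordinate subspace in the M\"obius coordinate system, whose dimension equals the number of irrelevant M\"obius coordinates. To finish, I would stratify these by their associated set: each irrelevant coordinate has the form $q_{i_1 \cdots i_n}$ with $\{j: i_j \in [d_j-1]\} = \tau$ for some $\tau \notin \Sigma$, and for fixed $\tau$ the number of such coordinates is $\prod_{j\in\tau}(d_j-1)$ (free choice in $[d_j-1]$ for $j \in \tau$, forced to $+$ outside $\tau$). Summing over $\tau \notin \Sigma$ gives (\ref{eq:serkan}).

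The only subtle step is the converse inclusion: one has to verify that the M\"obius-inversion rewriting of an arbitrary entry of $P_\sigma$ produces only relevant M\"obius coordinates. This is where the simplicial complex hypothesis on $\Sigma$ is essential -- without downward closure, some of the intermediate sets $\tau \subseteq \sigma$ that appear in the expansion might fail to lie in $\Sigma$, breaking the argument.
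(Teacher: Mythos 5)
Your proof is correct and follows essentially the same route as the paper: one inclusion is immediate because relevant M\"obius coordinates are themselves entries of marginal tensors $P_\sigma$ with $\sigma\in\Sigma$, and the converse is obtained by expanding each entry of $P_\sigma$ with some index equal to $d_j$ via M\"obius inversion into a signed sum of M\"obius coordinates supported on subsets of $\sigma$, where downward closure of $\Sigma$ guarantees these are all relevant. The dimension count by stratifying irrelevant coordinates over their support $\tau\notin\Sigma$ also matches the paper's argument.
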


\begin{proof}
This result appears in \cite[Theorem 2.6]{HS}, albeit in a slightly different context.
For completeness, we offer a proof.
We identify each  M\"obius coordinate with its image under $\varphi^*$, which is a sum of $p$-coordinates.
With this, each relevant M\"obius coordinate $q_\bullet$ is among the entries of the marginal tensor $P_\sigma$
where $\sigma = \{ \, j\, :\, i_j \in [d_j-1] \, \}$. Since $\sigma \in \Sigma$, the linear form
$q_\bullet$ vanishes on $\mathcal{L}_\Sigma$.
Now, it remains to consider entries $p_\bullet$ of $P_\sigma$ whose indices
involve $d_j$ for some $j \in \sigma$. Each such entry is an alternating sum of
relevant M\"obius coordinates supported on faces contained in $\sigma$. In particular,
$p_\bullet$ vanishes whenever all relevant M\"obius coordinates vanish.
This proves the first assertion. The second assertion is obtained by counting
irrelevant coordinates according to their support $\tau = \{ \, j\, :\, i_j \in [d_j-1] \, \}$.
\end{proof}

\begin{example}
The expression (\ref{eq:serkan}) is a polynomial in $d_1,\ldots,d_n$, which often simplifies greatly.
Consider the bowtie graph in Example \ref{ex:bowtie}.
The sum (\ref{eq:serkan}) over all $20$ nonfaces of $\Sigma$ becomes
\begin{multline*}
d_1 d_2 d_3 d_4 d_5 - d_1 d_2 - d_1 d_3 - d_2 d_3 - d_3 d_4 - {} \\
d_3 d_5 - d_4 d_5 + d_1 + d_2 + 3 d_3 + d_4 + d_5 - 2.
\end{multline*}
Of course, this expression evaluates to $20$ for $d_1= \cdots = d_5= 2$. 
The $20$ coordinates $q_B$ referred to in Example \ref{ex:bowtie}
are irrelevant while the $12$ coordinates $q_C$ are relevant.
\hfill $\triangle$
\end{example}

We now present the Toric Representation Theorem for marginal independence models.

\begin{theorem} \label{thm:sethmain} The variety $\mathcal{M}_\Sigma$ is irreducible, and its prime ideal
is toric in M\"obius coordinates. It is obtained
from the Segre ideal $J_\mathcal{S}$ by eliminating all M\"obius coordinates that are irrelevant.
Viewed modulo the linear space $\mathcal{L}_\Sigma$, the model
$\mathcal{M}_\Sigma$ is the toric variety given by the monomial parametrization (\ref{eq:parafac2})
where $q_{i_1 i_2 \cdots i_n}$ runs over all relevant M\"obius coordinates.
\end{theorem}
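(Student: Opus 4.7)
The plan is to combine the three lemmas established so far. Lemma \ref{lem:Mdeco} gives $\mathcal{M}_\Sigma = \mathcal{S} + \mathcal{L}_\Sigma$, Lemma \ref{lem:relevant} identifies $\mathcal{L}_\Sigma$ as a coordinate subspace in M\"obius coordinates, and Lemma \ref{lem:key} says that the Segre variety $\mathcal{S}$ remains toric in these coordinates. Together these let me realize $\mathcal{M}_\Sigma$ as a coordinate projection of $\mathcal{S}$, from which both the toric ideal and the monomial parametrization follow.

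First I would restate Lemma \ref{lem:relevant} in coordinate-free form: the projection $\pi \colon \RR^D \to \RR^D / \mathcal{L}_\Sigma$ is naturally identified with the projection onto the coordinate subspace spanned by the relevant M\"obius coordinates. Under this identification, the decomposition $\mathcal{M}_\Sigma = \mathcal{S} + \mathcal{L}_\Sigma$ says exactly that $\mathcal{M}_\Sigma = \pi^{-1}(\pi(\mathcal{S}))$. Hence the vanishing ideal of $\mathcal{M}_\Sigma$ in $\RR[Q]$ equals the extension to $\RR[Q]$ of the vanishing ideal of $\pi(\mathcal{S})$ in the subring of relevant coordinates, which is precisely the elimination of the irrelevant variables from the vanishing ideal of $\mathcal{S}$ in $\RR[Q]$. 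By Lemma \ref{lem:key}, the latter ideal is $J_\mathcal{S}$.

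Next, to identify $\pi(\mathcal{S})$ as a toric variety, I would restrict the monomial parametrization (\ref{eq:parafac2}) to the relevant coordinates. Since $\pi$ is a coordinate projection and the parametrization of $\mathcal{S}$ is already monomial in the parameters $\lambda$ and $\theta^{(j)}_k$, the composition remains monomial, with exponent vectors indexed by relevant $q_\bullet$; these are precisely the columns of the matrix $\ASigma$ alluded to in the preamble. The image of this restricted monomial map is $\pi(\mathcal{S})$, so $\pi(\mathcal{S})$ is toric, and its defining ideal equals the kernel of the restricted map, which is the elimination of the irrelevant variables from $J_\mathcal{S}$. This yields both conclusions of the theorem. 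Irreducibility of $\mathcal{M}_\Sigma$ then follows because $\mathcal{S}$ is irreducible and irreducibility is preserved under both the image and the preimage of the linear map~$\pi$.

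The one step that requires care is the set-theoretic identity $\mathcal{S} + \mathcal{L}_\Sigma = \pi^{-1}(\pi(\mathcal{S}))$, which is what translates the Minkowski decomposition of Lemma \ref{lem:Mdeco} into a clean coordinate-projection statement. This identity is immediate for any linear surjection with kernel $\mathcal{L}_\Sigma$, but it is critical that Lemma \ref{lem:relevant} exhibits $\mathcal{L}_\Sigma$ specifically as a coordinate subspace in M\"obius coordinates; otherwise elimination of the irrelevant variables would not cleanly correspond to the geometric projection. Once that point is in hand, the remaining arguments are formal facts about elimination ideals and toric parametrizations.
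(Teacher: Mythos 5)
Your proposal is correct and follows essentially the same route as the paper's proof: combine Lemmas~\ref{lem:key}, \ref{lem:Mdeco} and \ref{lem:relevant} to realize $\mathcal{M}_\Sigma$ as the cone over (i.e.\ the full preimage of) the coordinate projection of the Segre variety onto the relevant M\"obius coordinates, so that its ideal is the elimination ideal of $J_\mathcal{S}$ and the monomial parametrization restricts. Your explicit formulation $\mathcal{M}_\Sigma = \mathcal{S} + \mathcal{L}_\Sigma = \pi^{-1}(\pi(\mathcal{S}))$ is exactly what the paper means by ``the cone over a projection of $\mathcal{S}$ with center $\mathcal{L}_\Sigma$,'' and you have usefully spelled out the elimination and irreducibility steps that the paper's terse argument leaves implicit.
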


\begin{proof}
This was shown for $n=3$ by Sullivant in \cite[Section~4.3.2]{Sul}.
The proof for general $n$ is analogous.
In our set-up, it consists of Lemmas  \ref{lem:key}, \ref{lem:Mdeco} and \ref{lem:relevant}. Equation (\ref{eq:Mdeco}) says that
$\mathcal{M}_\Sigma$ is the cone over a projection of the Segre variety $\mathcal{S}$.
This projection has the center $\mathcal{L}_\Sigma$, when viewed in $\PP^{D-1}$.  Algebraically, this corresponds to
passing to the relevant M\"obius coordinates, by  Lemma~\ref{lem:relevant}.
Lemma \ref{lem:key} furnishes the remaining step. Since the Segre variety $\mathcal{S}$ is toric
in M\"obius coordinates, so is its projection onto the subset of relevant coordinates.
\end{proof}

\begin{corollary} \label{cor:Mdimen}
The marginal independence model has the dimension expected from (\ref{eq:Mdeco}), i.e.
\begin{multline}
\label{eq:dimformula}
{\rm dim}(\mathcal{M}_\Sigma) \,\, = 
{\rm dim}(\mathcal{S}) \,+\,
{\rm dim}(\mathcal{L}_\Sigma) \,\, = {} \\
\,\, \sum_{i=1}^n (d_i -1) \,\, + \,\,
 \sum_{\tau \not\in \Sigma} \prod_{j \in \tau} (d_j-1) . 
\end{multline}
In particular, in the binary case, this dimension equals $n$ plus the number of nonfaces of $\,\Sigma$.
\end{corollary}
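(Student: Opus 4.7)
The approach is to read the dimension formula directly from the Minkowski decomposition $\mathcal{M}_\Sigma = \mathcal{S} + \mathcal{L}_\Sigma$ of Lemma \ref{lem:Mdeco}, working throughout in M\"obius coordinates. The crucial observation is that, by Lemma \ref{lem:relevant}, the linear subspace $\mathcal{L}_\Sigma \subseteq \RR^D$ is precisely the coordinate subspace spanned by the irrelevant M\"obius coordinates. Splitting $\RR^D = \RR^{\mathrm{rel}} \oplus \RR^{\mathrm{irr}}$ accordingly gives $\mathcal{L}_\Sigma = 0 \oplus \RR^{\mathrm{irr}}$; letting $\pi \colon \RR^D \to \RR^{\mathrm{rel}}$ be the coordinate projection, equation (\ref{eq:Mdeco}) rewrites as $\mathcal{M}_\Sigma = \pi^{-1}(\pi(\mathcal{S}))$. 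This displays the affine cone of $\mathcal{M}_\Sigma$ as a cylinder over $\pi(\mathcal{S})$ with linear fibre $\mathcal{L}_\Sigma$, and therefore
\[ \dim_{\mathrm{aff}}(\mathcal{M}_\Sigma) \,=\, \dim_{\mathrm{aff}}(\pi(\mathcal{S})) \,+\, \dim_{\mathrm{aff}}(\mathcal{L}_\Sigma). \]

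The remaining non-trivial step is to verify that $\dim \pi(\mathcal{S}) = \dim \mathcal{S} = 1 + \sum_i(d_i-1)$, i.e., the projection to the relevant coordinates does not drop the dimension of the Segre variety. By Theorem \ref{thm:sethmain}, $\pi(\mathcal{S})$ is toric under the monomial parametrization obtained by restricting (\ref{eq:parafac2}) to relevant M\"obius coordinates, and its dimension equals the rank of the exponent matrix in the $1 + \sum_i(d_i-1)$ free parameters $\lambda$ and $\theta^{(j)}_k$. Here the standing assumption $\emptyset \in \Sigma$ and $\{j\} \in \Sigma$ for every $j$ becomes essential: the empty-support coordinate is relevant and maps to $\lambda$, while each singleton-support coordinate is relevant and maps to $\lambda\theta^{(j)}_k$ for some $j \in [n]$ and $k \in [d_j-1]$. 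The corresponding $1 + \sum_i(d_i-1)$ exponent vectors form a triangular family in $(\lambda,\theta)$-space and are therefore linearly independent, so the exponent matrix has full rank and $\dim \pi(\mathcal{S}) = \dim \mathcal{S}$.

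Combining these with Lemma \ref{lem:relevant} gives $\dim_{\mathrm{aff}}(\mathcal{M}_\Sigma) = \bigl(1 + \sum_i(d_i-1)\bigr) + \sum_{\tau \notin \Sigma}\prod_{j \in \tau}(d_j-1)$, and subtracting one to pass to the projective dimension inside $\PP^{D-1}$ yields the claimed formula. The binary case follows by substituting $d_i = 2$, which turns each product $\prod_{j \in \tau}(d_j-1)$ into $1$ and reduces the outer sum to the count of nonfaces of $\Sigma$. The only real obstacle is bookkeeping: one must keep track of whether each $\dim$ is affine-cone or projective, and one must verify that $\pi$ does not collapse $\mathcal{S}$. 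The latter is precisely what the hypothesis $\{i\} \in \Sigma$ secures; given Theorem \ref{thm:sethmain} and the preparatory lemmas, no heavier algebraic input is required.
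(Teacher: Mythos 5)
Your proof is correct and takes essentially the same route as the paper's: both pinpoint the relevant M\"obius coordinates $q_\emptyset$ and those with singleton support as the ones guaranteeing the projection with center $\mathcal{L}_\Sigma$ does not drop the dimension of $\mathcal{S}$, which is exactly where the hypothesis $\{i\}\in\Sigma$ enters. The paper phrases this as birationality of the projection by rational recovery of the parameters $\lambda$ and $\theta^{(i)}_j$, while you phrase it as full row rank of $\mathcal{A}_\Sigma$ via the triangular family of exponent vectors; these are two faces of the same observation, and your affine-versus-projective bookkeeping is correctly handled.
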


\begin{proof}
The hypothesis $\{i\} \in \Sigma$ for all $i$ ensures that the projection 
with center $\mathcal{L}_\Sigma$ is birational on the Segre variety $\mathcal{S}$.
Indeed, all model parameters in (\ref{eq:parafac2}) can be recovered rationally from the
relevant M\"obius coordinates: the quadratic monomials  $\lambda \theta^{(i)}_{j}$  occur 
for all $i \in [n]$ and $j \in [d_i-1]$, and the special parameter $\lambda$ occurs for the empty face in $\Sigma$.
\end{proof}

In toric algebra \cite[Chapter 8]{MS}, one represents a toric variety by the integer matrix 
whose columns are the exponent vectors in a monomial parametrization.
In our setting, we denote this matrix by $\mathcal{A}_\Sigma$.
It has $1 + \sum_{i=1}^n(d_i-1)$ rows,  indexed by the
 model parameters $\lambda$ and $\theta^{(i)}_j$.
 Its   columns are
 indexed by the relevant M\"obius coordinates.
 To be precise, the columns of $\mathcal{A}_\Sigma$ are
the $0$-$1$ exponent vectors of the relevant monomials seen on the right in~(\ref{eq:parafac2}).
The convex hull of these vectors is the model polytope $\mathcal{P}_\Sigma = {\rm conv}(\mathcal{A}_\Sigma)$.
This is a full-dimensional subpolytope of 
the product of simplices $\Delta_{d_1-1} \times \cdots \times \Delta_{d_n-1}$ that is
associated with~$\mathcal{S}$.

\begin{example}[Binary $3$-cycle]\label{ex:3cycleMtx}
For Example \ref{ex:3cycle} with $d_i=2$, we obtain the $4 \times 7$ matrix
$$  \mathcal{A}_\Sigma\,\,\, \,=\quad \begin{small} \bordermatrix{  & \,q & q_1 & q_2 & q_3 & q_{12} & q_{13} & q_{23} \cr
\lambda & \,1 &  1 & 1 & 1 & 1 & 1 & 1 \cr
\theta^{(1)}_1 &\, 0 & 1 & 0 & 0 & 1 & 1 & 0 \cr
\theta^{(2)}_1 & \,0 & 0 & 1 & 0 & 1 & 0 & 1 \cr
\theta^{(3)}_1 &\, 0 & 0 & 0 & 1 & 0 & 1 & 1 \cr
}. \end{small}
$$
The  toric ideal $I_{\mathcal{A}_\Sigma}$  is given in
\eqref{eq:3cycle222}. The $3$-dimensional polytope $\mathcal{P}_\Sigma = {\rm conv}(\mathcal{A}_\Sigma)$ arises from
the regular $3$-cube by slicing off one vertex. By contrast, the binary hierarchical model for the $3$-cycle, in the sense
of \cite{HS}, corresponds to a $6$-dimensional polytope with $8$ vertices.
 \hfill $\triangle$
\end{example}

\begin{example}[Binary matroid models] \label{ex:matroid}
Consider any matroid on $[n]$, and let $\Sigma$ be its complex of independent sets.
Then $\mathcal{P}_\Sigma$ is the independent set polytope of the matroid.  \hfill $\triangle$
\end{example}

\section{Ideal Generators}
\label{sec4}

We are interested in the toric ideal $I_{\ASigma} $ of the marginal
independence model $\mathcal{M}_\Sigma$ in  M\"obius coordinates $q_\bullet$.
For binary models arising from matroids (Example \ref{ex:matroid}), we conjecture that
$I_{\ASigma}$ is generated by quadrics. This is closely related to a famous conjecture due to Neil White
\cite[Conjecture 13.16]{MS} which asserts quadratic generation for the
toric ideal of the matroid base polytope. We here consider the polytope
of all independent sets.
 White's conjecture has been proved for many special cases, e.g.~for
graphic matroids by Blasiak \cite{Blasiak}, for rank three matroids by Kashiwabara \cite{Kashi}, and  up to saturation in~\cite{LM}.

 For our binary matroid models,
 the toric ideal $I_{\ASigma}$ is normal (the proof is analogous to the base polytope case; see \cite[Theorem 13.8]{MS}) and hence it is Cohen-Macaulay.
In general, the Cohen-Macaulay property for simplicial complexes was asserted in \cite[Conjecture 28]{Kir}, along with the
expected dimension for $\mathcal{M}_\Sigma$. We proved the second part of Kirkup's conjecture
in Corollary \ref{cor:Mdimen}, but the first part remains open. 
Presently, we know no ideal
$I_{\ASigma}$ that fails to be Cohen-Macaulay.
In particular, we do not yet know how to transfer the counterexamples in \cite{BTO}
to our setting.
This issue is subtle because non-normal models exist (see \cite{MOS} and Remark \ref{rmk:nono}).

Our next result  states that the toric ideals
$I_{\ASigma}$ can have minimal generators of arbitrarily high degree.
This is based on a construction that extends
the bowtie in Example~\ref{ex:bowtie}.

\begin{theorem} 
\label{thm:binaryHighDegree} 
Let $\Sigma$ be the  $1$-dimensional simplicial complex associated with the graph
\begin{center}
\begin{tikzpicture}
  [scale=1,auto=center,every node/.style={circle,fill=black,scale=0.5}]
  \node (n1) [label=below:{\Large$1$}] at (0,0)  {};
  \node (n2) [label=below:{\Large$2$}] at (1,0)  {};
  \node (nminus5) [label=below:{\Large$n-5$}]  at (3,0)  {};
  \node (nminus4) [label=below:{\Large$n-4$}]  at (4,0)  {};
  \node (nminus3) [label=right:{\Large$n-3$}]  at (5,-0.5)  {};
  \node (nminus2) [label=right:{\Large$n-2$}]  at (5,0.5)  {};
  \node (nminus1) [label=left:{\Large$n-1$}]  at (-1,-0.5)  {};
  \node (n) [label=left:{\Large$n$}]  at (-1,0.5)  {};
  \foreach \from/\to/\weight in {n1/n2/1,  nminus5/nminus4/1, nminus4/nminus3/1, nminus3/nminus2/1,nminus4/nminus2/1, nminus1/n1/1, nminus1/n/1,n/n1/1}
    \draw (\from) -- (\to);
  \foreach \from/\to/\weight in {n2/nminus5/1}
    \draw[dotted] (\from) -- (\to);
\end{tikzpicture}
\end{center}
Then the toric ideal $I_{\ASigma}$ for the binary model $\mathcal{M}_\Sigma$ has a minimal generator of degree $n-2$.
\end{theorem}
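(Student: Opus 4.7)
The plan is to construct an explicit degree-$(n-2)$ binomial from an even closed walk in the graph and establish its minimality through a direct fiber enumeration. Consider the closed walk
\[
W:\ n-1 \to 1 \to 2 \to \cdots \to n-4 \to n-3 \to n-2 \to n-4 \to n-5 \to \cdots \to 1 \to n \to n-1
\]
of length $2(n-2)$: the edge $\{n-1,1\}$, the $n-5$ forward path edges, the three edges of the triangle $\{n-4,n-3,n-2\}$, the $n-5$ backward path edges, and finally $\{1,n\}$ and $\{n,n-1\}$. Let $m_+$ and $m_-$ denote the products of M\"obius coordinates $q_e$ for edges $e$ at odd, respectively even, positions along $W$. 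Because $W$ is closed, at each vertex the walk-incident edges split evenly between odd and even positions, so $m_+$ and $m_-$ share the same vertex-incidence multidegree. By Theorem~\ref{thm:sethmain}, the binomial $f := m_+ - m_-$ lies in $I_{\ASigma}$ and has degree $n-2$. A direct inspection shows $\gcd(m_+, m_-) = 1$: in each triangle the spokes and the diagonal sit at positions of opposite parity, and each path edge $\{i,i+1\}$ is traversed twice at positions of the same parity, so it appears in only one of $m_\pm$.

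The main step, and also the main obstacle, is to verify that $\{m_+, m_-\}$ is the entire fiber $F_b$ of the toric map over the common multidegree $b$ of $m_\pm$, where $b$ has $\lambda$-weight $n-2$, weight $2$ at each vertex in $\{1,2,\ldots,n-4\}$, and weight $1$ at each of $\{n-3,n-2,n-1,n\}$. Since the total vertex weight $2(n-2)$ equals twice the number of factors in any decomposition, only edges may appear. At each multiplicity-one vertex $n-1, n, n-3, n-2$, exactly one of its two incident triangle-edges is used, yielding a dichotomy for each triangle: either use its diagonal (and neither spoke) or both of its spokes (and not the diagonal). With these triangle choices fixed, the path-edge multiplicities $x_i$ of $\{i,i+1\}$ are determined by the recursion $x_{i-1}+x_i=2$ at each interior path vertex, with boundary values $x_1, x_{n-5} \in \{0,2\}$ set by whether vertices $1$ and $n-4$ have already been covered twice by the chosen triangle edges. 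The recursion forces an alternating pattern on the $x_i$, and checking the four combinations of triangle choices shows that only two are consistent (the remaining two conflict with the parity of $n-5$). These two solutions are exactly $m_+$ and $m_-$.

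Minimality now follows from a standard argument. Suppose for contradiction that $f = \sum_i h_i g_i$ with each $g_i = m_i^+ - m_i^-$ a binomial in $I_{\ASigma}$ of degree strictly less than $n-2$ and each $h_i$ a monomial. Equating the multidegree-$b$ components of both sides, only those $h_i g_i$ having multidegree $b$ contribute, and each such summand has $h_i m_i^+, h_i m_i^- \in F_b = \{m_+, m_-\}$. Hence $h_i$ divides both $m_+$ and $m_-$, so $h_i \mid \gcd(m_+, m_-) = 1$, forcing $h_i = 1$ and $\deg g_i = n-2$, a contradiction. Therefore $f$ is a minimal generator of $I_{\ASigma}$ of degree $n-2$.
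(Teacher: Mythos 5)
Your binomial is the same one the paper exhibits, up to relabeling which term is ``$+$'' and which is ``$-$'': reading off your closed walk $W$, the odd- and even-position edge products reproduce exactly the two monomials of the paper's $f$, and the parity bookkeeping you do (spokes and diagonal of each triangle at opposite parity; each internal path edge traversed twice at the same parity) correctly yields $\gcd(m_+,m_-)=1$. Where you diverge from the paper is in the proof of indispensability. The paper cites the hypergraph machinery of Gross--Petrovi\'c: it models $\mathcal A_\Sigma$ as a parameter hypergraph, observes that $f$ corresponds to a balanced edge multiset, and shows no splitting set exists by arguing that any candidate split forces the spurious vertex $q_\emptyset$ into the support. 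You instead enumerate the fiber $F_b$ of the toric map over the common multidegree $b$ directly: the $\lambda$-weight $n-2$ together with total vertex weight $2(n-2)$ forces every factor to be an edge coordinate; the weight-one vertices $n-3,n-2,n-1,n$ impose a diagonal-versus-spokes dichotomy on each triangle; and the recursion $x_{i-1}+x_i=2$ along the path, with endpoint values pinned in $\{0,2\}$ by the triangle choices, leaves exactly two consistent assignments (which pair of triangle choices survives depends on the parity of $n-5$, exactly accounting for the paper's even/odd case split). That gives $F_b=\{m_+,m_-\}$, and your closing argument---take the multidegree-$b$ component of a putative expression $f=\sum h_ig_i$ with $\deg g_i<n-2$, observe $h_im_i^\pm\in F_b$, hence $h_i\mid\gcd(m_+,m_-)=1$---is the standard reduction from ``two-element fiber with coprime monomials'' to ``minimal generator.'' Both routes are sound; the paper's transfers the combinatorics to a reusable hypergraph criterion, while yours is elementary, self-contained, and arguably more transparent for this particular graph since the recursion along the path does all the work.
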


\begin{proof} 
Consider the following binomial in the M\"obius coordinates associated with the edges:
\begin{align*}
\mbox{[$n$ even]} \quad f &= q_{n-1,n}  q_{12}^2q_{34}^2\cdots q_{n-5,n-4}^2  q_{n-3,n-2} - {} \\
&\hphantom{{}={}} q_{1,n-1}q_{1n} q_{23}^2q_{45}^2\dots q_{n-6,n-5}^2 q_{n-4,n-3}q_{n-4,n-2}, \\
\mbox{[$n$ odd]} \quad  f &= q_{n-1,n}     q_{12}^2q_{34}^2\cdots q_{n-6,n-5}^2  q_{n-4,n-3}q_{n-4,n-2} - {} \\
&\hphantom{{}={}} q_{1,n-1}q_{1n}    q_{23}^2q_{45}^2\dots q_{n-5,n-4}^2   q_{n-3,n-2}.
\end{align*}
The binomial $f$ has degree $n-2$. Each of its two monomials
uses the indices $1,2,\ldots,n-4$ twice, and it uses the indices $n-3,n-2,n-1,n$ once.
Hence $f$ vanishes under the
monomial map   in (\ref{eq:parafac2}), which sends  $q_{ij}$ to $ \lambda \theta^{(i)}_1 \theta^{(j)}_1$,
and this means that $f$ lies in the toric ideal  $I_{\ASigma}$.
 We claim that $f$ occurs in every minimal generating set of $I_{\ASigma}$.
In the language of algebraic statistics \cite{AHT}, $f$ is {\em indispensable} for the Markov basis of $\ASigma$, i.e.~the
corresponding fiber of the nonnegative integer linear map 
given by $\mathcal{A}_\Sigma$ has only two elements.

This can be shown by means of  a hypergraph technique. 
Namely, since  $\ASigma$ is $0/1$, it is an incidence matrix of  \emph{the parameter hypergraph}  $\mathcal H_\Sigma$  of the model. Defined in \cite[Section 3.1]{PS}, vertices of $\mathcal H_\Sigma$ are $\{\lambda \} \cup \{\theta^{(i)}_1\}_{i=1}^n$, and edges are of size $1$, $2$, and $3$, representing monomial images of   $q_\emptyset$, $q_i$, and $q_{ij}$, respectively. 
A  pair of edge multisets  $\mathcal E:=\mathcal E_{blue} \cup \mathcal E_{red}$ in   $\mathcal H_\Sigma$ is said to be \emph{balanced} if $\deg_{\mathcal E_{blue}}(v)=\deg_{\mathcal E_{red}}(v)$ for each vertex $v$. 
Denote by $f_{\mathcal E}:=f_{\mathcal E_{red}}-f_{\mathcal E_{blue}}$ a binomial supported on $\mathcal E$; by \cite[Proposition 3.1]{GrPe}, every binomial in  $I_{\ASigma}$ is of this form. 
\cite[Proposition 4.1]{GrPe} states that if there exists no \emph{splitting set} of the balanced edge set $\mathcal E$ supporting the binomial, then the binomial  is indispensable. A splitting set of a $\mathcal E$ is a set $S$ of edges in the hypergraph such that $\mathcal E+S$ can be decomposed into two other balanced sets overlapping on $S$, a condition that in  $I_{\ASigma}$  translates to $f_{\mathcal E}$ being generated by smaller binomials supported on subsets $\Gamma_1$ and $\Gamma_2$. Precisely,   $\mathcal E+S = \Gamma_1\sqcup\Gamma_2$;  $S=\Gamma_{1red}\cap\Gamma_{2blue}$; and the two new sets respect coloring: $\Gamma_{1red},\Gamma_{2red}\subset\mathcal E_{red}\sqcup S$,  and similarly for blue.

For our $f$, a splitting set $S$ must correspond to proper faces of $\Sigma$,  
 and due to the color-balancing requirement, the only choices that could  split $\mathcal E$ are the middle vertices of the graph, 
 that  is,  the size-2 edges $\{\lambda\theta_1^{(i)}\}$ for $1\leq i\leq n-4$ in the hypergraph.  If any hypergraph edge $\{\lambda\theta_1^{(i)}\} \in S$, then  $\Gamma_1$  
   represents the collection to the left 
     of vertex $i$.  Then $\deg_{blue}(\lambda)\geq 1+\deg_{red}(\lambda)$ in $\Gamma_1$,  
 requiring the addition of the singleton $\{\lambda\}$ to $\Gamma_{1blue}$. In  M\"obius coordinates,  this means $q_\emptyset$ is required in the support to make the binomial supported on $\Gamma_1$ homogeneous and thus living in $I_{\ASigma}$. But this addition violates the splitting set definition, and as it is the only way to color-balance, a splitting set does not exist. 
\end{proof}

\begin{remark} \label{rmk:nono}
For $n \geq 7$, both terms of the indispensable binomial $f$ 
contain the square of a coordinate $q_\bullet$.
From this one can infer that $I_{\ASigma}$ is not normal.
We refer to \cite{MOS} for a systematic construction of
simplicial complexes $\Sigma$ whose
 binary model $I_{\ASigma}$ is not normal.
\end{remark}

Here is another example with a high-degree generator which we found quite interesting.

\begin{example}[Sextic for $n=6$] \label{ex:sextic}
Fix the binary model for the  $2$-dimensional complex~with facets
$123$, $146$, $245$, $124$, $156$, $236$, $345$.  The ideal
$I_{\ASigma}$ is minimally generated by $161$ quadrics and one sextic, namely 
${q_{123}^2 q_{146}^2 q_{245}^2-q_{124}^3q_{156} q_{236}q_{345}}$.
All $161$~quadrics are squarefree.
\hfill $\triangle$
\end{example}

We next offer a brief comparison to the graphical set-up of Drton and Richardson in \cite{DR}.

\begin{example} \label{ex:DRcomp}
We fix the bidirected graph in the article \cite{DR} to be the $5$-cycle. This represents the model defined by
$\, \bigl\{
{1 \CI 34},\, \allowbreak
{2 \CI 45},\, \allowbreak
{3 \CI 15},\, \allowbreak
{4 \CI 12},\, \allowbreak
{5 \CI 23} \bigr\}$.
Its ideal is also toric in M\"obius coordinates, by \cite[Theorem 1]{DR}.
 A saturation step reveals that it is  generated by $25$ quadrics:
\begin{multline*}
\langle \,q_{134} q-q_1 q_{34},\,q_{245} q-q_2 q_{45},\,q_{135} q-q_3 q_{15}, \\
  q_{124} q-q_4 q_{12}, \,q_{235} q-q_5 q_{23},\,q_{13} q-q_1 q_3, q_{24}q-q_2 q_4, \\
  q_{35} q-q_3 q_5, \,q_{14} q-q_1 q_4, \,q_{25} q-q_2 q_5\,\rangle\, : \langle q \rangle^\infty.
\end{multline*}
That bidirected $5$-cycle model has dimension $21$ and degree $83$. In the simplex $\Delta_{31}$, 
it is naturally sandwiched between two of our models.
It contains $\mathcal{M}_\Sigma$ for $\Sigma = [134,245,135,124,235]$, which has
dimension $16$, 
degree $68$, and $84$ quadrics.
And, it is contained in the model
$\mathcal{M}_{\Sigma'}$ for $\Sigma' = [13,24,35,14,25]$, which has
dimension  $26$, 
degree $12$, and $10$ quadrics.
\hfill $\triangle $
\end{example}

The original motivation for this project  came from algebraic models for random graphs.
Marginal independence for binary random variables offers a natural class of such models.
Let $n = \binom{s}{2}$ and consider undirected simple
graphs on $s$ vertices, with edges labeled by $[n]$. These graphs are in bijection with subsets of $[n]$.
 Probability distributions on graphs are points in the simplex $\Delta_{2^n-1}$.
Example \ref{ex:matroid} introduces a new model for marginal independence of edges in a graph. This  model is 
  associated with the binary graphic matroid of the complete graph $K_s$, and  $\Sigma $ is the simplicial complex of all forests.

\begin{example}[Random graph model] \label{ex:randomgraph}
We consider graphs on $s=4$ nodes $a,b,c,d$. The~$n=6$
possible edges are labeled $1{:} ab, \allowbreak
2{:}ac, \allowbreak 3{:}ad, \allowbreak 4{:} bc, \allowbreak
5{:} bd, \allowbreak 6{:} cd $.
The associated simplicial complex $\Sigma$ is $2$-dimensional and it has $1+6+15+16=38$ simplices.
These simplices are the $38$ forests
with vertices $a,b,c,d$. The facets of $\Sigma$ are the $16$ spanning trees in the
complete graph $K_4$. Explicitly, they are the
 triples in $[6]$ other than the cycles $124, 135, 236, 456 $.
 Each of the $2^n = 64$ graphs with vertex set  $\{a,b,c,d\}$ has a certain
 probability $p_{i_1 i_2 \cdots i_6}$, according to 
our model. Being in $\mathcal{M}_\Sigma$ means that the edges in any
spanning tree are chosen independently. However, choices of edges are no longer 
independent when a cycle is formed.

The random graph model $\mathcal{M}_\Sigma$ has dimension~$32$
and lives in $\Delta_{63}$.
In the $38$ M\"obius coordinates, it is given by
a toric ideal $I_{\mathcal{A}_\Sigma}$ of codimension $31$ and degree $320$.
Here $\mathcal{P}_\Sigma = {\rm conv}(\mathcal{A}_\Sigma)$
is the independent set polytope of the graphic matroid of $K_4$, which
 has dimension~$6$ and volume $320$.
The ideal $I_{\mathcal{A}_\Sigma}$ is generated by $358$ quadratic binomials, including
\begin{gather*}
q_1 q_2 -q \, q_{12}, \,\, q_{13} q_5-q_1q_{35}, \\
 q_{34} q_{16}-q_{1}q_{346}, \,\,  q_{34} q_{125}-q_{12} q_{345}, \\
 q_{346} q_{256} - q_{246} q_{356}.
\end{gather*}
 Each of these is a quadratic constraint in the probability coordinates
 $p_{i_1 \cdots i_6}$.
 These are the probabilities of the $64$ graphs.$\triangle$ \end{example}

\section{Parameter Estimation}
\label{sec5}

\begin{table*}[h]
\begin{tabular}{rcccccccccc}
     & Deg   & \# Real & $\hat p_{111}$ & $\hat p_{112}$ & $\hat p_{121}$ & $\hat p_{122}$ & $\hat p_{211}$ & $\hat p_{212}$ & $\hat p_{221}$ & $\hat p_{222}$ \\
 aED & $17$  & $1$     & $0.50038$      & $0.25053$      & $0.12563$      & $0.06290$      & $0.03224$      & $0.01614$      & $0.00809$      & $0.00405$      \\
 pED & $6$   & $4$     & $0.49995$      & $0.25003$      & $0.12508$      & $0.06255$      & $0.03161$      & $0.01581$      & $0.00790$      & $0.00395$      \\
 ML  & $1$   & $1$     & $0.49610$      & $0.25096$      & $0.12645$      & $0.06397$      & $0.03307$      & $0.01673$      & $0.00843$      & $0.00426$
\end{tabular}
\caption{The degrees and approximate real solutions to the three estimation
problems for the data in Example~\ref{ex:tobiasdata}.}
\label{tab:tobiasdata}
\end{table*}
 
\begin{figure*}[h]
\begin{center}
\[
\begin{matrix}
f_{\rm{ML}} = 131072  x - 65025, \medskip \\
f_{\rm{pED}} = 51906045358364777223177895936000000 x^6-25949282071385528387934418422988800 x^5-
667203267923068375899220639744 x^4 \\ -10201884458974141010339840 x^3+974466168682810262544 x^2-
439726079196072 x+12131289, \medskip \\
f_{\rm{aED}} = 35921448857987957984672857043619023958379363041280 x^{17}-
112184368601411610971663864087396131463766799810560 x^{16} \\ + 
123023056420734950185604640886224384168207042215936 x^{15}-
35318017343233552889505872276403371258008842534912 x^{14} \\ -
31119632092263938978426745207865986613878169010176 x^{13}+
21061388167151685422887633264505958597233035182080 x^{12} \\ +
4192104300175526169054650369772794981753424969728 x^{11}-
8636877987056563398788782025888916859618833989632 x^{10} \\ +
4765228672547153475258729308728421976809238691840 x^9-
2061854808289365276272409996102579828730342932480 x^8 \\+
671335210647435251072776883161281453372064399360 x^7  -
141014273158845796646692769011540039191619698688 x^6 \\ +
28850396232423871007588638133200796814819309568 x^5-
4546486299487890665409123856114280233201184176 x^4 \\ +29996751771765190786011997856469656605245926 x
^3-100528993612875586639200519747729624258489235 x^2 \\ -11767793432133648786334889808703275655521163
 x \,-1233313433553657451832543422209230995792527.
\end{matrix}
\]
\end{center}
\caption{Minimal polynomials for the exact solution coordinate
$x = \hat p_{111}$ of the ML, pED and aED estimators in
Table~\ref{tab:tobiasdata}.}
\label{fig:minpoly}
\end{figure*}

Estimating model parameters from data is a fundamental task in statistics. In our setting,
the data is a tensor $U$ of format $d_1 \times \cdots \times d_n$ whose entries
$u_{i_1 \cdots i_n}$ are nonnegative integers, indicating the number of times each given joint state was observed.
The empirical distribution $\frac{1}{|U|} U $ is a rational point in $\Delta_{D-1}$.
We seek a distribution $\hat P$ in the model $\mathcal{M}_\Sigma$
that best explains the data, in an appropriate statistical sense. We wish to estimate the model parameters 
$(\hat \lambda, \hat \theta)$ that map to~$\hat P$.

We examine three different paradigms for parameter estimation.
These are
Maximum Likelihood (ML) degree, and affine (aED) and projective (pED) Euclidean
distance degree:
\begin{align*}
\tag{ML}  &{\rm max}\,\,\, \sum u_\bullet {\rm log}(p _\bullet)\,\,\,
\hbox{s.t.} \,\,\, p \in \mathcal{M}_\Sigma. \\
\tag{aED} &{\rm min}\,\,\, \sum (u_\bullet - p_\bullet)^2 \,\,\,
\hbox{s.t.} \,\, p \in \mathcal{M}_\Sigma \,\,{\rm and} \,\sum p_\bullet = 1. \\
\tag{pED} &{\rm min}\,\,\, \sum (u_\bullet - p_\bullet)^2 \,\,\,
\hbox{s.t.} \,\,\, p \in \mathcal{M}_\Sigma.
\end{align*}

All three optimization problems are meaningful for data analysis, and also for algebraic geometry.
Recall that $\mathcal{M}_\Sigma$ is an affine cone in $\RR^D$, 
encoding a projective variety in $\PP^{D-1}$.
Problem (pED) asks for the point  in
that affine cone which is closest to $U$. In the algebraic approach we
compute all critical points in $\CC^D$.
In problem (aED) we add the further constraint that the tensor
entries $p_\bullet$ must sum to $1$. Thus (aED) refers
to the Euclidean distance problem for an affine variety in 
the hyperplane $\{\sum p_\bullet = 1\}$ namely
the Zariski closure of the statistical model
obtained by intersecting the cone $\mathcal{M}_\Sigma$ with the
simplex $\Delta_{D-1}$. The subtle distinction between the
projective case and the affine case is discussed  in \cite[Section~6]{EDdeg}.

Problem (ML) is likelihood inference, which is ubiquitous in statistics.
For a geometric introduction see \cite{LikeGeo}.
The log-likelihood function is invariant, up to an additive constant, under
scaling the tensor $P$, and there
is no need to distinguish between affine and projective.

The intrinsic algebraic complexity of a polynomial
optimization problem is the number of complex critical points,
assuming the data tensor $U$ is generic.
For the problems above,
that number is called the {\em ML degree}, the {\em aED degree} and the {\em pED degree},
as in \cite{EDdeg, LikeGeo}.

\begin{example}[$n=3$] \label{ex:tobiasdata}
We fix three binary random variables.
The Segre variety $\mathcal{S} = \mathcal{M}_{2^{[3]}}$ is the set of
$ 2 \times 2 \times 2$ tensors of rank $1$. This variety has
ML degree $1$ and aED degree $17$. Its pED degree is $6$, by \cite[Example 8.2]{EDdeg}.
The three solutions for the data
$ \,(u_{111}, u_{112}, u_{121}, u_{122}, u_{211}, u_{212}, u_{221}, u_{222} ) =
(2^{-1}, \allowbreak 2^{-2}, \allowbreak 2^{-3}, \allowbreak 2^{-4}, \allowbreak
2^{-5}, \allowbreak 2^{-6}, \allowbreak 2^{-7}, \allowbreak 2^{-7})\,$ are displayed
in Table~\ref{tab:tobiasdata}.
 The last row shows the rational numbers  $\hat p_{ijk} = u_{+jk} u_{i+k} u_{ij+}$.
 The solutions for aED and ML sum to $1$, but for pED it does not. For pED there are three other real critical points.

 The three solutions are close to each other in $\RR^8$. However, for an algebraist they are very different.
 To appreciate this, consider the respective minimal polynomials for the solution coordinate $x = \hat p_{111}$
 in Figure~\ref{fig:minpoly}. All three polynomials are irreducible in $\ZZ[x]$, but their size varies considerably.
 \hfill $\triangle$
\end{example}

We performed this computation for a wide range of marginal
independence models $\mathcal{M}_\Sigma$. In what follows we discuss
our methodology. Our findings are summarized in the next section, along
with pointers to a comprehensive database.

We begin with the Euclidean distance problems (aED) and (pED).
Our varieties are toric and hence rational, given both parametrically, by 
Lemma \ref{lem:Mdeco},  and via an implicit representation, 
by Theorem \ref{thm:sethmain}.
 The former leads to an {\em unconstrained optimization problem} whose
 decision variables are the model parameters $(\lambda,\theta)$.
 Its critical equations are simply those in \cite[equation (2.4)]{EDdeg}.
The latter leads to a {\em constrained optimization problem} whose decision
variables are the tensor entries $p_{i_1 i_2 \cdots i_n}$;  the critical equations are given in \cite[equation (2.1)]{EDdeg}.

We solve the unconstrained problem using the {\tt julia} package
{\tt HomotopyContinuation.jl} due to Breiding and Timme \cite{HomotopyContinuation}.  To illustrate
this package, we show the code for  $n=3$:

\begin{small}
\begin{verbatim}
using HomotopyContinuation
@var q1 q2 q3 q12 q13 q23 q123 z
u000, u001, u010, u011, u100, u101, u110, u111 =
   [ 1//2, 1//4, 1//8, 1//16, 1//32, 1//64, 1//128, 1//128 ]
diffs = [u000 - z*(q123), u001 - z*(q12-q123),
   u010 - z*(q13-q123), u011 - z*(q1-q12-q13+q123),
   u100 - z*(q23-q123), u101 - z*(q2-q12-q23+q123),
   u110 - z*(q3-q13-q23+q123),
   u111 - z*(1-q1-q2+q12-q3+q13+q23-q123)]
dist = sum([d^2 for d in diffs])
\end{verbatim}
\end{small}
This sets up the objective function {\tt dist} in M\"obius parameters.
We~next specify the model:
\begin{small}
\begin{verbatim}
model = [q12=>q1*q2, q13=>q1*q3, q23=>q2*q3, q123=>q1*q2*q3]
dist = subs(dist, model...)  # projective ED
vars = variables(dist)
eqns = differentiate(dist, vars)
R = solve(eqns)
\end{verbatim}
\end{small}
This solves the pED problem in Example \ref{ex:tobiasdata}.
If we now run \begin{small} {\tt C = certify(eqns,R)}\end{small}, then this proves correctness,
in the sense of \cite{BRT}.
By deleting entries of {\tt model}, we can specify other complexes $\Sigma$.
For instance,  for the 3-cycle, delete \begin{small} {\tt q123=>q1*q2*q3}\end{small}.
 For aED we use the line
 \begin{small}
\begin{verbatim}
model = [q12=>q1*q2, q13=>q1*q3, q23=>q2*q3,
         q123=>q1*q2*q3, z=>1]  # affine ED
\end{verbatim}
\end{small}
Results of this computation for all binary models up to $n=5$ are 
presented in Section~\ref{sec6}.
Symbolic computation was used to independently
verify a range of small cases. 

Our experiments suggest the following conjecture for all marginal independence models:

\begin{conjecture}
Given any nonnegative tensor $U$ whose entries sum to $1$, the
aED problem has precisely one real critical point,
namely the point $\hat P \in \mathcal{M}_\Sigma \cap \Delta_{D-1}$ that is closest to $U$.
\end{conjecture}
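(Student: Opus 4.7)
The plan is to combine the structural decomposition $\mathcal{M}_\Sigma = \mathcal{S} + \mathcal{L}_\Sigma$ from Lemma~\ref{lem:Mdeco} with a two-stage optimization: first eliminate the linear direction $\mathcal{L}_\Sigma$ by Euclidean projection, then analyze the resulting function on the parameter space of the Segre factor.

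First I would observe that each $P \in \mathcal{M}_\Sigma \cap \Delta_{D-1}$ admits the canonical decomposition $P = \mu^\otimes + L$, where $\mu^\otimes := \mu_1 \otimes \cdots \otimes \mu_n$ is the product of the $1$-marginals $\mu_i := P_{\{i\}} \in \Delta_{d_i-1}$ and $L \in \mathcal{L}_\Sigma$. The constraint $\sum p_\bullet = 1$ is equivalent to each $\mu_i$ lying in $\Delta_{d_i-1}$; it places no further restriction on $L$, since every element of $\mathcal{L}_\Sigma$ has vanishing total sum.

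Next, for fixed $\mu$, the objective $\|U - \mu^\otimes - L\|^2$ is strictly convex quadratic in $L$, and its unique minimum over $\mathcal{L}_\Sigma$ is attained at the orthogonal projection $L^*(\mu) := \pi_{\mathcal{L}_\Sigma}(U - \mu^\otimes)$. The residual objective in $\mu$ alone is
\[
h(\mu) \,:=\, \bigl\|\pi_{\mathcal{L}_\Sigma^\perp}(U - \mu^\otimes)\bigr\|^2,
\]
and every critical point of the aED problem has the form $(\mu, L^*(\mu))$ with $\mu$ a critical point of $h$ on $\Delta_{d_1-1}\times\cdots\times\Delta_{d_n-1}$. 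The subspace $\mathcal{L}_\Sigma^\perp$ is spanned by the indicator vectors of fibers of the marginal maps $T \mapsto T_\sigma$ for $\sigma \in \Sigma$, so $h(\mu)$ factors through a positive-definite quadratic form $\Phi$ evaluated on the tuple of marginal discrepancies $\bigl(U_\sigma - \bigotimes_{i\in\sigma}\mu_i\bigr)_{\sigma\in\Sigma}$; the critical-point equations couple the $\mu_i$ only through these marginal-matching conditions.

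The main obstacle is to prove that $h$ has a unique critical point on the product of simplices. Since $\mu \mapsto \bigotimes_{i\in\sigma}\mu_i$ is multilinear rather than affine, $h$ is generally not convex in $\mu$, and uniqueness does not follow from standard considerations. I would first verify the claim in two extremes: the discrete model $\Sigma = \{\emptyset,\{1\},\dots,\{n\}\}$, where $h$ decouples entirely into $n$ one-dimensional problems, and the full Segre $\Sigma = 2^{[n]}$, where $h$ reduces to the squared distance between $U$ and a single rank-one tensor. The general case then calls for either of two strategies. Strategy \textbf{(i)}: realize $h$ as strictly convex in suitable coordinates on $\prod_i \Delta_{d_i-1}$, perhaps via a convex potential whose Bregman divergence equals $h$, in analogy with the role of the log-partition function in Birch's theorem for toric maximum likelihood. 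Strategy \textbf{(ii)}: run a real parameter homotopy in $U$, starting from $U \in \mathcal{M}_\Sigma \cap \Delta_{D-1}$, where the unique critical point is $\hat P = U$ itself, and deform to arbitrary data; uniqueness propagates unless new real critical points emerge from the boundary of the feasible region or from infinity. Ruling this out requires careful control of $h$ on $\partial\bigl(\prod_i \Delta_{d_i-1}\bigr)$, and that boundary analysis is where I expect the bulk of the technical work to lie.
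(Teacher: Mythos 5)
This statement is a conjecture in the paper, introduced by ``Our experiments suggest the following conjecture'' and left unproved; the paper supplies only computational evidence. So there is no proof to compare against, and your submission is, by your own admission, a plan rather than an argument.

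Within the plan, the reduction via Lemma~\ref{lem:Mdeco} to the residual function $h(\mu)=\|\pi_{\mathcal{L}_\Sigma^\perp}(U-\mu^\otimes)\|^2$ after projecting out $L\in\mathcal{L}_\Sigma$ is a sensible first move and correctly isolates the multilinear difficulty. However, two substantive gaps remain. First, restricting $\mu$ to $\prod_i\Delta_{d_i-1}$ is not justified: the aED degree is defined over the Zariski closure of $\mathcal{M}_\Sigma\cap\Delta_{D-1}$ in the hyperplane $\{\sum p_\bullet=1\}$, and a real critical point on that variety corresponds only to real $\mu_i$ with $\sum_j\mu_{ij}=1$, not to $\mu_i\in\Delta_{d_i-1}$. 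Your claim that the sum-to-one constraint ``is equivalent to each $\mu_i$ lying in $\Delta_{d_i-1}$'' is false --- it omits nonnegativity. Ruling out real critical points with some $\mu_i$ having negative coordinates is precisely the part of the conjecture that numerical evidence (such as the aED degrees $5$, $17$, $21$, \dots\ in Table~\ref{tab:four}, each with only one real solution) leaves unexplained. Second, neither strategy (i) nor (ii) is carried out: no convex potential or Bregman structure is exhibited, and the homotopy argument is left entirely to a boundary analysis that you flag as the hard part. The conjecture therefore remains exactly as open under your proposal as in the paper; a complete proof would most likely need to engage with the full real affine variety rather than the semialgebraic model and with the combinatorics of $\Sigma$ (e.g.\ via the toric structure of Theorem~\ref{thm:sethmain}), none of which appears here.
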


For maximum likelihood estimation, most statisticians
use local hill-climbing methods, such as Iterative Conditional Fitting \cite[Section~5]{DR}.  
By contrast, we here use the global numerical tools of nonlinear algebra
\cite{HomotopyContinuation, BRT}. The objective function for ML is entered as~follows:
\begin{small}
\begin{verbatim}
p = [z*(q123), z*(q12-q123), z*(q13-q123),
   z*(q1-q12-q13+q123), z*(q23-q123),
   z*(q2-q12-q23+q123), z*(q3-q13-q23+q123),
   z*(1-q1-q2+q12-q3+q13+q23-q123)]
loglike = sum([u[i] * log(p[i]) for i = 1:length(u)])
\end{verbatim}
\end{small}
Following \cite{ST}, we solve the rational function equations
given by the gradient of {\tt loglike}. We avoid the numerator polynomials, which are
much too large. The introduction of \cite{ST} discusses numerical ML by stating that
``a key idea is to refrain from clearing denominators''.

Example \ref{ex:tobiasdata} suggests that the
ED degrees exceed the ML degree. However, this is incorrect.
The Segre variety is an exception. Almost all models $\mathcal{M}_\Sigma$ have a larger
  ML degree.

\begin{example}[$n=5$] Fix the bowtie in Example~\ref{ex:bowtie} and consider
a data tensor $U \in \Delta_{31}$. We
estimate the $26$ model parameters, in order to find
the best fit $ \hat P \in \mathcal{M}_\Sigma$. Solving the critical equations for
Euclidean distance is much faster than for maximum likelihood.
We see this from the degrees, which reveal
the number of paths to be tracked in {\tt HomotopyContinuation.jl}.
The aED degree equals $137$ and the pED degree equals $135$.
We do not yet know the ML degree, but
monodromy loops show that it exceeds one million.
An easier model is the claw graph
$\Sigma = [12,13,14,15]$, whose
ML degree equals $14693$. The
aED degree and the pED degree are only $5$ and~$2$, respectively.
\hfill $\triangle$
\end{example}

\begin{remark}
  Some of our largest ML degrees
  still have a small upward margin of error. Those are indicated in Table~\ref{tab:four}
  with a trailing~``$+$''.
  All the  numbers we report are certified lower bounds, thanks to \cite{BRT},
  just like \cite[Proposition~5]{ST}.
  \end{remark}

\begin{table*}[h]
\begin{tabular}{cccccccc}
dimension & degree & mingens & f-vector           & simplicial complex $\Sigma$ &  aED  &  pED  &   ML    \\
   $15$   &  $1$   & $()$    & $(1,4)_5$          & $[1,2,3,4]$                 &   $1$ &   $1$ &     $1$ \\
   $14$   &  $2$   & $(1)$   & $(1,4,1)_6$        & $[3,4,12]$                  &   $5$ &   $2$ &     $1$ \\
   $13$   &  $3$   & $(3)$   & $(1,4,2)_7$        & $[4,12,13]$                 &   $5$ &   $2$ &     $9$ \\
   $13$   &  $4$   & $(2)$   & $(1,4,2)_7$        & $[14,23]$                   &  $25$ &  $25$ &  $1041$ \\
   $12$   &  $4$   & $(6)$   & $(1,4,3)_8$        & $[12,13,14]$                &   $5$ &   $2$ &   $209$ \\
   $12$   &  $5$   & $(5)$   & $(1,4,3)_8$        & $[12,14,23]$                &  $21$ &   $8$ &  $1081$ \\
   $12$   &  $5$   & $(5)$   & $(1,4,3)_8$        & $[4,12,13,23]$              &  $21$ &  $19$ &    $17$ \\
   $11$   &  $6$   & $(9)$   & $(1,4,4)_9$        & $[13,14,23,24]$             &   $9$ &   $3$ &  $1937+$ \\
   $11$   &  $6$   & $(9)$   & $(1,4,3,1)_9$      & $[4,123]$                   &  $17$ &   $6$ &     $1$ \\
   $11$   &  $7$   & $(8)$   &  $(1,4,4)_9$       & $[12,13,14,23]$             &  $33$ &  $25$ &  $2121$ \\
   $10$   &  $8$   & $(13)$  & $(1,4,4,1)_{10}$   & $[14,123]$                  &  $21$ &   $8$ &   $425$ \\
   $10$   &  $9$   & $(12)$  & $(1,4,5)_{10}$     & $[12,13,14,23,24]$          &  $25$ &  $25$ &  $4480+$ \\
    $9$   & $11$   & $(17)$  & $(1,4,5,1)_{11}$   & $[14,34,123]$               &  $57$ &  $29$ &  $1737+$ \\
    $9$   & $12$   & $(16)$  & $(1,4,6)_{11}$     & $[12,13,14,23,24,34]$       &  $73$ &  $85$ & $11885+$ \\
    $8$   & $12$   & $(24)$  & $(1,4,5,2)_{12}$   & $[123,124]$                 &  $25$ &   $8$ &   $201$ \\
    $8$   & $16$   & $(21)$  & $(1,4,6,1)_{12}$   & $[14,24,34,123]$            & $117$ & $112$ &  $8551+$ \\
    $7$   & $18$   & $(28)$  & $(1,4,6,2)_{13}$   & $[34,123,124]$              &  $89$ &  $90$ &  $2121+$ \\
    $6$   & $20$   & $(36)$  & $(1,4,6,3)_{14}$   & $[123,124,134]$             &  $89$ &  $46$ &   $505$ \\
    $5$   & $23$   & $(44)$  & $(1,4,6,4)_{15}$   & $[123,124,134,234]$         & $169$ &  $93$ &   $561$ \\
    $4$   & $24$   & $(55)$  & $(1,4,6,4,1)_{16}$ & $[1234]$                    &  $73$ &  $24$ &     $1$ \\
\end{tabular}
\caption{All $20$ marginal independence models for four binary random variables. \label{tab:four}}
\end{table*}

\section{Classification of Small Models}
\label{sec6}

Every $n$-way tensor has an associated simplicial complex, namely 
the index sets of all marginalizations that have rank $\leq 1$. Conversely,
every simplicial complex $\Sigma$ is realized by some tensor.
Thus, the space of tensors $\RR^D$ has a natural stratification, with cells
 indexed by all simplicial complexes on $[n]$.
Our model $\mathcal{M}_\Sigma$ is the closure of the cell indexed by $\Sigma$.
The map from simplicial complexes to marginal independence models is inclusion-reversing:
\begin{equation} \label{eq:inclusionreversing}
\Sigma \subset \Sigma' \quad \hbox{if and only if} \quad
\mathcal{M}_{\Sigma} \supset \mathcal{M}_{\Sigma'}.
\end{equation}
The geometry of this stratification is important for Bayesian model selection \cite[Chapter~17]{Sul}.

In this section we present the classification of all small models. The number of 
unlabeled simplicial complexes on $n = 1,2,3,4,5,6,7$ vertices equals
$1,2, 5, 20, 180, 16143, 489996795$.
Exhaustive computations are thus limited to $n \leq 6$.
Further, we restrict ourselves to the binary case $d_1 = \cdots = d_n = 2$.
We denote each complex by its list of facets. For instance,
$\Sigma = [12,13,23]$ is the 3-cycle in Example  \ref{ex:3cycle}.
The complete list for $n=4$ is shown in Table \ref{tab:four}.
The last three columns confirm that the ML degree exceeds the
ED degrees for most models.

The first three columns in Table \ref{tab:four} describe
$\mathcal{M}_\Sigma$ as a projective toric variety: its dimension, its degree,
and the number of minimal generators, here all quadratic.
Similar lists of all models for $n=3,4,5,6$ and samples of code in {\tt Macaulay2} and {\tt julia} 
can be found~at 
$$ \hbox{ \url{https://mathrepo.mis.mpg.de/MarginalIndependence}}. $$
For each model, the repository gives both parametrization 
and implicit representation, in machine-readable form, so a
reader can experiment with these in a computer algebra system.

Concerning the minimal generators of $I_{\mathcal{A}_\Sigma}$, we record
the following result from our data.

\begin{proposition} \label{prop:manyideals}
For $178$ of the $180$ models $\Sigma$ with $n=5$, the
ideal $I_{\mathcal{A}_\Sigma}$ is generated by quadrics. The two exceptions are the bowtie 
and the complex $\Sigma = [123,124,134,145,234, 235]$
whose ideal has degree $73$ and
requires $97$ quadrics and $1$ cubic.
For $14104$ of the $16143$ models with $n=6$, the
ideal $I_{\mathcal{A}_\Sigma}$ is generated by quadrics. 
Of the others,  $1930$ ideals require~cubics,
$104$ require quartics, four require quintics, 
and only one (that in Example \ref{ex:sextic}) requires a~sextic.
\end{proposition}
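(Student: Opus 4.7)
The proof is essentially a large-scale computer enumeration, so my plan focuses on organizing the computation reliably, certifying its correctness, and keeping the total work feasible.

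First, I would generate representatives of the unlabeled simplicial complexes on $n=5$ and $n=6$ vertices satisfying the hypothesis $\{i\}\in\Sigma$ for all $i$. Standard catalogs (or a direct enumeration using \texttt{nauty} for canonical forms on the ground set $[n]$) give the $180$ and $16143$ isomorphism classes. Then, for each $\Sigma$, I would construct the $0/1$ matrix $\mathcal{A}_\Sigma$ described before Example~\ref{ex:3cycleMtx}: its rows are indexed by the parameters $\lambda$ and $\theta^{(i)}_1$ ($i\in[n]$), and its columns by the relevant M\"obius coordinates $q_F$ with $F\in\Sigma$, with column $F$ being the indicator of $\{\lambda\}\cup\{\theta^{(i)}_1:i\in F\}$. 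By Theorem~\ref{thm:sethmain}, $I_{\ASigma}$ is exactly the toric ideal of this matrix, so the task reduces to a minimal-generator computation for a toric ideal in at most $2^n$ variables.

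Next, for each $\Sigma$ I would compute a minimal generating set of $I_{\ASigma}$ and record the maximum degree that actually appears. Practically, I would use \texttt{4ti2}'s \texttt{markov} (or Macaulay2's \texttt{toricMarkov} / \texttt{minimalPresentation}) to obtain a Markov basis, then pass to a minimal generating set either by computing the $\ZZ$-graded Betti table (the number of minimal generators in multidegree $\mathbf{a}$ equals $\dim_{\mathbb{K}}\mathrm{Tor}_1(\RR[\ASigma],\mathbb{K})_{\mathbf{a}}$) or, more directly, by iteratively discarding any binomial $f$ in the current generating list that lies in the ideal generated by the others of strictly smaller degree. For each $n$ I aggregate the counts: $178$ quadratically generated, $2$ requiring a cubic for $n=5$; and the tallies $14104/1930/104/4/1$ for $n=6$. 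The two exceptional complexes for $n=5$ and the sextic complex of Example~\ref{ex:sextic} should be reported explicitly, and their highlighted generators (the cubic $q_{45}q_{12}q_{13}-q_{14}q_{15}q_{23}$ from Example~\ref{ex:bowtie} and the sextic from Example~\ref{ex:sextic}) verified to be indispensable by the hypergraph/splitting-set argument of the proof of Theorem~\ref{thm:binaryHighDegree}, which also provides a sanity check on the Markov-basis computation.

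The main obstacle is neither conceptual nor algebraic but logistical: for $n=6$ one must certify the outcome of $16143$ independent toric-ideal computations. Two issues need care. First, to avoid the exponential blow-up that can occur in Gr\"obner-based approaches for toric ideals, I would use the Project-and-Lift algorithm in \texttt{4ti2} and work in the M\"obius coordinates directly, exploiting the fact that the number of relevant variables is exactly $|\Sigma|$, which is typically much smaller than $2^n=64$. Second, to guarantee minimality (and not just generation), I would cross-check the degree profile against the multigraded Betti numbers computed independently in Macaulay2 on a random sample, and re-verify the four quartic cases, four quintic cases and the unique sextic case by hand using the indispensability criterion of \cite{GrPe} as in the proof of Theorem~\ref{thm:binaryHighDegree}. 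Finally, to avoid double-counting complexes, I would confirm that the enumeration was over isomorphism classes using a canonical labeling, since $I_{\ASigma}$ is invariant under relabeling of $[n]$.
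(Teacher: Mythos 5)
Your proposal is correct and matches the paper's approach: Proposition~\ref{prop:manyideals} is a computational result, stated by the authors as recorded ``from our data,'' i.e.\ from the exhaustive enumeration and toric-ideal computations underlying their database on MathRepo. Your workflow (enumerate isomorphism classes of simplicial complexes, build $\mathcal{A}_\Sigma$, compute minimal generators of each toric ideal, tally the maximum generator degrees, and spot-check indispensability via the hypergraph criterion) is precisely the kind of certified large-scale computation the paper relies on.
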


Here is the ideal with the most non-quadratic generators among those in  Proposition \ref{prop:manyideals}:

\begin{example}[$\RR \PP^2$ with $n{=}6$]
Fix $\Sigma = [123, 124, \allowbreak 135, \allowbreak 146, \allowbreak
156, \allowbreak 236, \allowbreak 245, \allowbreak 256, \allowbreak 345, 346]$.
This is the minimal triangulation of the real projective plane.
Its toric ideal $I_{\mathcal{A}_\Sigma}$  is generated by 
$209$ quadrics, $10$ cubics and $15$ quartics. This model
has codimension $25$ and degree $275$.
\hfill $\triangle$
\end{example}

We do not yet have complete results for the algebraic degrees of the estimation problems.
Computing the ML degree seems to be out of reach for $n=5$. The ED degrees are better
behaved. The data provides the following result. 

\begin{proposition}
Among all $180$ binary models for ${n=5}$, 
the largest aED degree equals $1457$, namely for
$\Sigma = [1234, 125, \allowbreak 135, \allowbreak 145, \allowbreak 235, \allowbreak 245, 345]$;
it has ${\it pED} = 1077$. The largest  pED degree equals $1247$, namely 
for the complex $\Sigma$ given by all $10$ triangles; it has ${\it aED} = 1425$.
\end{proposition}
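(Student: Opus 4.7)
The statement is a complete enumeration claim over the $180$ unlabeled simplicial complexes on $[5]$, so the proof is necessarily computational. The plan is to carry out the enumeration explicitly, compute both $\mathrm{aED}$ and $\mathrm{pED}$ for each model, and then read off the two extremal entries.

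First I would generate the list of all $180$ simplicial complexes on $[5]$ up to relabeling of vertices, using the same canonicalization underlying Table~\ref{tab:four} and the MathRepo database. For each $\Sigma$, Theorem~\ref{thm:sethmain} supplies the monomial parametrization (\ref{eq:parafac2}) of $\mathcal{M}_\Sigma$ in M\"obius coordinates, and inverting the upper-triangular map $\varphi$ gives the $p$-coordinates as explicit polynomials in $(\lambda, \theta^{(i)}_1)$. This yields the objective functions
\[
F_{\mathrm{pED}}(\lambda,\theta) = \sum_{i_1,\ldots,i_5} (u_{i_1 \cdots i_5} - p_{i_1 \cdots i_5}(\lambda,\theta))^2,
\]
with $F_{\mathrm{aED}}$ obtained by setting $\lambda=1$, exactly as in the code snippets of Section~\ref{sec5}.

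Next I would generate generic complex data $U \in \CC^{32}$ (using random Gaussian entries to avoid non-generic degeneracies in the fibers of $\mathcal{A}_\Sigma$) and solve the critical equations $\nabla F = 0$ via \texttt{HomotopyContinuation.jl}, counting the isolated solutions; these counts are by definition the $\mathrm{aED}$ and $\mathrm{pED}$ degrees. I would run the search repeatedly, starting from different random data and starting solutions, using monodromy to grow the solution set until the count stabilizes, and then apply \texttt{certify} \cite{BRT} to obtain rigorously certified lower bounds. To turn these into \emph{equalities}, I would intersect with an independent verification: for the two purported extremal complexes $\Sigma = [1234,125,135,145,235,245,345]$ and $\Sigma = [\text{all }\binom{5}{3}\text{ triangles}]$, I would compute the ED degree symbolically in \texttt{Macaulay2} as the degree of the ideal of critical points in the toric variety, using the formulation in \cite[eq.~(2.1)]{EDdeg}, and check it agrees with the numerical count. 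For the remaining $178$ models, I would verify that the certified count stabilizes under multiple monodromy reseedings; since these degrees are strictly below the claimed maxima (by wide margins), no additional exactness is needed.

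The main obstacle is not any single computation but the sheer bookkeeping: ensuring the $180$ complexes are canonicalized exactly once each, checking that no monodromy run has missed a trace in the two extremal (and largest) cases, and confirming the pairing of the two extremal values with their partner degrees. Genericity of $U$ must be verified a posteriori by observing that randomly perturbed data give the same count. Once the certified table is in hand, the proposition is immediate: scanning the $180$ entries yields $\max \mathrm{aED} = 1457$, attained only at $[1234,125,135,145,235,245,345]$ with $\mathrm{pED} = 1077$ on the same row, and $\max \mathrm{pED} = 1247$, attained only at the $2$-skeleton (all ten triangles) with $\mathrm{aED} = 1425$ on the same row. The resulting data is recorded in the MathRepo repository referenced in Section~\ref{sec6}, which serves as the machine-readable certificate of the proof.
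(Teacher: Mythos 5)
Your proposal matches the paper's approach: the proposition is a report of a finite computational enumeration, and the paper proves it exactly as you describe — canonicalizing the $180$ complexes, setting up the unconstrained (parametric) critical equations in M\"obius parameters, solving with \texttt{HomotopyContinuation.jl}, and certifying via \cite{BRT}, with symbolic cross-checks in small cases and the full output deposited on MathRepo.

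One caution worth flagging in your write-up: for the $178$ non-extremal models you claim that since their computed degrees are far below the maxima, ``no additional exactness is needed.'' That reasoning is not sound as stated, because certification only yields a \emph{lower} bound on the number of critical points; a model with a certified count far below $1457$ could in principle still have additional undiscovered solutions exceeding $1457$. To close the argument you need to establish that the numerical counts are \emph{upper} bounds as well, for all $180$ complexes — e.g.\ by a trace test, by verifying the count against the degree of the critical ideal symbolically, or by using the fact that the pED degree is bounded above by the sum of the polar degrees of the toric variety (computable from the model polytope $\mathcal{P}_\Sigma$). The paper implicitly confronts the same issue and handles it by reporting trailing ``$+$'' for the ML degrees where completeness is in doubt, while treating the ED degrees as stable under monodromy reseeding; your plan to independently verify the two extremal cases symbolically is a good supplement, but the logic for the other $178$ needs the upper-bound argument, not merely a ``wide margin.''
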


We now turn to marginal independence models defined by matroids (Example~\ref{ex:matroid}).
This includes our random graph models (Example \ref{ex:randomgraph}).
Matroid models are attractive because they connect
independence in linear algebra and independence in statistics.
For realizable matroids,  the binary model $\mathcal{M}_\Sigma$ 
assigns a probability to each subset of $n$ given vectors in a vector space $V$.
Vectors are chosen independently as long as they are independent in $V$.
However, choices of vectors are no longer independent when linear dependencies occur.

\begin{example}[Fano matroid] \label{ex:fano}
The finite vector space $V = \FF_2^3$ has $n=7$ non-zero vectors.
 The complex $\Sigma$ consists of all $28$ bases and their subsets, and
$\mathcal{M}_\Sigma$ is a model for random subsets of $V \backslash \{0\}$.
The number of relevant M\"obius coordinates is $1+7+21+28=57$.
The ideal $I_{\mathcal{A}_\Sigma}$ 
has codimension $49$ and degree $1207$.
It is generated by 
$868$ quadrics; cf.~\cite{Kashi}.
\hfill $\triangle$
\end{example}

We examined all loopless matroids on $n \leq 7$ elements.
Their number is $305$, up to isomorphism.
Their toric ideals $I_{\mathcal{A}_\Sigma}$ are all generated by quadrics.
The ED and ML degree computations do not become
simpler in the matroid case. In fact, the highest ML degree  in Table~\ref{tab:four} is attained for the uniform matroid $U_{2,4}$
and the highest aED degree for $U_{3,4}$. 

The matroids appear among models in our database, where they
are especially marked. Their ED degrees
are available for all models
up to $n=5$, and the ML degrees for all models up to $n=4$.
For some models of special interest, we also computed beyond those limits.

\begin{example}[$n=6$]  Consider the random graph model in Example \ref{ex:randomgraph}.
Given any sample of $4$-vertex graphs, we compute its best fit 
in the model. Using {\tt HomotopyContinuation.jl}, we
determined that the aED degree equals $1981$, while
the pED degree equals $3512$. 
\hfill $\triangle$
\end{example}

We conclude with several directions for future research.
First, consider the statistical results in \cite{DR}.
It would be worthwhile to identify all singularities of
$\mathcal{M}_\Sigma$, with a view towards
\cite[Corollary 3]{DR}. Also, the  Iterative Conditional 
Fitting algorithm \cite[Section~5]{DR} should be developed
for our models. Naturally, it is desirable to determine whether
multiple local optima inside $\Delta_{D-1}$ can occur, for
the various models, under the three estimation paradigms.

For any given matroid, our model strictly contains the 
model of \emph{weak probabilistic representations} due to
  Mat\'{u}\v{s} \cite{Matus}. This expresses higher conditional and functional
(in)dependences. Among its points are tensors
encoding linear representations of the matroid over certain
finite fields, but not all tensors in the Mat\'{u}\v{s} models arise in
this way. It would be interesting to study these models in our setting.
 Example~\ref{ex:fano} is a point of departure.

One class of models to be examined in depth
is the {\em $r$-way  marginal independence model}. This is
the $\mathcal{M}_\Sigma$
given by the uniform matroid of rank $r$ on $[n]$, so
$\Sigma$ is the collection of all subsets of $[n]$ with at most $r$ elements.
A tensor is in $\mathcal{M}_\Sigma$ if and only if all its
$r$-dimensional marginals have rank $\leq 1$.  What can be said about
parameter estimation for these models?

\bigskip
\medskip

\begin{acks}
We are very grateful to Kemal Rose and Simon Telen for helping us with
the software {\tt HomotopyContinuation.jl}. We thank T\`{a}i H\`{a} and Takayuki Hibi for pointers on toric~rings.
SP is partially supported by the Simons collaboration grant for Mathematicians \#854770.
This project began when SP visited MPI-MiS Leipzig in September 2021.
She is eternally grateful for the hospitality of her hosts and everyone at the institute.
\end{acks}

\medskip

\end{document}